\newtheorem{prop}[theorem]{Proposition}
\theoremstyle{plain}
\newtheorem{example}[theorem]{Example}
\numberwithin{equation}{section}
\numberwithin{theorem}{section}
\numberwithin{table}{section}
\numberwithin{figure}{section}
\def\prox#1#2{\mathbf{prox}^{\scriptscriptstyle #1}_{#2}}
\def\env#1#2{\mathbf{env}^{\scriptscriptstyle #1}_{#2}}
\newcommand{\Qsip}{\texttt{QSip}}
\newcommand{\SWinv}{\texttt{SWinv}}
\newcommand{\Qcone}{\mathbb{Q}}
\newcommand{\TheTitle}{Efficient evaluation of scaled proximal operators}
\newcommand{\TheShortTitle}{Scaled proximal operators}
\newcommand{\TheAuthors}{Michael P. Friedlander and Gabriel Goh}
\date{March 5, 2016}
\newcommand{\TitleAndAuthorCommands}{%
  \title{\MakeUppercase{\TheTitle}\thanks{\today. Research supported
      by ONR award N00014-17-1-2009.}}  \author{Michael P.
    Friedlander%
    \thanks{Departments of Computer Science and Mathematics, University of British Columbia, Vancouver, BC, V6T 1Z4, Canada 
      (\email{mpf@cs.ubc.ca}).}
    \and Gabriel Goh%
    \thanks{Department of Mathematics, University of California, Davis
      (\email{gabgohjk@gmail.com}).}}  }
\begin{document}

\TitleAndAuthorCommands

\maketitle

\begin{abstract}
  Quadratic-support functions [Aravkin, Burke, and Pillonetto; J.\@
  Mach.\@ Learn.\@ Res. 14(1), 2013] constitute a parametric family of
  convex functions that includes a range of useful regularization
  terms found in applications of convex optimization. We show how an
  interior method can be used to efficiently compute the proximal
  operator of a quadratic-support function under different
  metrics. When the metric and the function have the right structure,
  the proximal map can be computed with cost nearly linear in the
  input size. We describe how to use this approach to implement
  quasi-Newton methods for a rich class of nonsmooth problems that
  arise, for example, in sparse optimization, image denoising, and
  sparse logistic regression.
\end{abstract}

\begin{keywords}
  support functions, proximal-gradient, quasi-Newton, interior method
\end{keywords}

\begin{AMS}
  90C15, 90C25
\end{AMS}

\section{Introduction} \label{introduction}

The proximal operator is a key ingredient in many convex optimization
algorithms for problems with nonsmooth objective
functions. Proximal-gradient algorithms in particular are prized for
their applicability to a broad range of convex problems that arise in
machine learning and signal processing, and for their good theoretical
convergence properties. Under reasonable hypotheses they are
guaranteed to achieve, within $k$ iterations, a function value that is
within $\BigOh(1/k)$ of the optimal value using a constant step size,
and within $\BigOh(1/k^2)$ using an accelerated
variant. \citet{Tseng:2010} outlines a unified view of the many
proximal-gradient variations, including accelerated versions such as
FISTA~\cite{beck2009fast}.

In its canonical form, the proximal-gradient algorithm applies to
convex optimization problems of the form
\begin{equation}
  \label{eq:6}
  \minimize{x\in\Real^n} \quad f(x)+g(x),
\end{equation}
where the functions $f:\Real^n\to\Real$ and
$g:\Real^n\to\Real\cup\{+\infty\}$ are convex. We assume that $f$ has
a Lipschitz-continuous gradient, and that $g$ is lower semicontinuous
\citep[Definition~1.5]{RTRW:1998}. Typically, $f$ is a loss function
that penalizes incorrect predictions of a model, and $g$ is a
regularizer that encourages desirable structure in the
solution. Important examples of nonsmooth regularizers are the 1-norm
and total variation, which encourage sparsity in either $x$ or its
gradient. %

Suppose that $H$ is a positive-definite matrix. The iteration
\begin{equation}
  \label{eq:prox-map}
  x^{+}=\prox{H}{g}(x-H^{-1}\nabla f(x))
\end{equation}
underlies the prototypical proximal-gradient method, where $x$ is most
recent estimate of the solution, and
\begin{equation}
  \label{eq:2}
  \prox{H}{g}(z):=\argmin_{x\in\Real^n} \left\{
  \tfrac{1}{2}\|z-x\|\H^{2}+g(x) \right\}
\end{equation}
is the (scaled) proximal operator of $g$. The scaled diagonal
$H=\alpha I$ is typically used to define the proximal operator because
it leads to an inexpensive proximal iteration, particularly in the
case where $g$ is separable. (In that case, $\alpha$ has a natural
interpretation as a steplength.) More general matrices $H$, however,
may lead to proximal operators that dominate the computation. The
choice of $H$ remains very much an art. In fact, there is an entire
continuum of algorithms that vary based on the choice of $H$, as
illustrated by Table~\ref{tab:methods}.
\begin{table}
  \caption{Preconditioned proximal-gradient methods.}
  \centering
  \begin{tabular}{lll}
    \toprule
    Method & Reference & $H$
    \\\midrule
      iterative soft thresholding & \cite{beck2009fast} &$\alpha I$
    \\symmetric rank 1 & \cite{NIPS2012_4523} & $\alpha I+ss^{T}$
    \\identity-minus-rank-1 & \cite{1401.4220}  &$\alpha I-ss\T$
    \\proximal L-BFGS &\cite{schmidt2009optimizing,NIPS2014_5384,Scheinberg2016} &$\Lambda+SDS^{T}$
    \\proximal Newton &\cite{lee2014proximal,JMLR:v16:trandihn15a,Byrd2016} & $\nabla^{2}f$
    \\\bottomrule
  \end{tabular}
  \label{tab:methods}
\end{table}
The table lists a set of algorithms roughly in order of the accuracy
with which $H$ approximates the Hessian---when it exists---of the
smooth function $f$. At one extreme is iterative soft thresholding
(IST), which approximates the Hessian using a constant diagonal. At
the other extreme is proximal Newton, which uses the true Hessian. The
quality of the approximation induces a tradeoff between the number of
expected proximal iterations and the computational cost of evaluating
the proximal operator at each iteration. Thus $H$ can be considered a
preconditioner for the proximal iteration. The proposals offered by
\citet{JMLR:v16:trandihn15a} and \citet{Byrd2016} are flexible in the
choice of $H$, and so those references might also be considered to
apply to other methods listed in Table~\ref{tab:methods}.

The main contribution of this paper is to show how for an important
family of functions $g$ and $H$, the proximal operator~\eqref{eq:2}
can be computed efficiently via an interior method. This approach
builds on the work of \citet{aravkin2013sparse}, who define the class
of quadratic-support functions and outline a particular interior
algorithm for their optimization.  Our approach is specialized to the
case where the quadratic-support function appears inside of a proximal
computation.  Together with the correct dualization approach
(\S\ref{sec:prox-to-qp}), this yields a particularly efficient
interior implementation when the data that define $g$ and $H$ have
special structure (\S\ref{sec:eval-prox-oper}). The proximal
quasi-Newton method serves as a showcase for how this technique can be
used within a broader algorithmic context (\S\ref{sec:quasi-Newton}).

\section{Quadratic-support functions} \label{sec:problem-setup}

\citet{aravkin2013sparse} introduce the notion of a
quadratic-support (QS) function, which is a generalization of
sublinear support functions~\cite[Ch.~8E]{RTRW:1998}. Here we
introduce a slightly more general definition than the version
implemented by Aravkin et al.\@ We retain the ``QS'' designation
because the quadratic term, which is an essential feature of their
definition, can also be expressed by the version we use here.

Let $\Ball_p=\set{z|\norm{z}_p\le1}$ and
$\Kscr = \Kscr_1 \times \cdots \times \Kscr_k$, where each cone
$\Kscr_i$ is either a nonnegative orthant $\Real_+^m$ or a
second-order cone
$\Qcone^m=\set{(\tau,z)\in\Real\times\Real^{m-1} | z\in\tau\Ball_2}$.
(The size $m$ of the cones may of course be different for each index
$i$.)  The notation $Ay\succeq_\Kscr b$ means that $Ay-b\in\Kscr$, and
$\tau\Ball_p\equiv\set{\tau z | z \in\Ball_p}$. The indicator on a
convex set $U$ is denoted as
\[
\delta(x\mid U) = \begin{cases}
                           0        & \mbox{if $x\in U$,}
                         \\ +\infty & \mbox{otherwise.}
                          \end{cases}
\]
Unless otherwise specified, $x$ is an $n$-vector.  To help
disambiguate dimensions, the $p$-by-$p$ identity matrix is denoted by
$I_p$, and the $p$-vector of all ones by $\one_p$.

We consider the class of functions
$g:\Real^n\to\Real\cup\{+\infty\}$ that have the conjugate
representation
\begin{equation}
  \label{eq:qs}
  g(x) = \sup_{y\in\Yscr}\  y\T(Bx+d),
  \text{where}
  \Yscr = \{y\in\Real^\ell\mid Ay \succeq_\Kscr b\}.
\end{equation}
(The term ``conjugate'' alludes to the implicit duality, since $g$ may
be considered as the conjugate of the indicator function to the set
$\Yscr$.)  We assume throughout that the feasible set
$\Yscr=\set{y | Ay\succeq_\Kscr b}$ is nonempty. If $\Yscr$ contains
the origin, the QS function $g$ is nonnegative for all $x$, and we can
then consider it to be a penalty function. This is automatically true,
for example, if $b\le0$.

The formulation~\eqref{eq:qs} is close to the standard definition of a
sublinear support function \cite[\S13]{Roc70}, which is recovered by
setting $d=0$ and $B=I$, and letting $\Yscr$ be any convex set. Unlike
a standard support function, $g$ is not positively homogeneous if
$d\ne0$. This is a feature that allows us to capture important classes
of penalty functions that are not positively homogeneous, such as
piecewise quadratic functions, the ``deadzone'' penalty, or indicators
on certain constraint sets. These are examples that are not
representable by the standard definition of a support function.  Our
definition springs from the quadratic-support function definition
introduced by \citet{aravkin2013sparse}, who additionally allow for an
explicit quadratic term in the objective and for $\Yscr$ to be any
nonempty convex set. The concrete implementation considered by
\citeauthor{aravkin2013sparse}, however, is restricted to the case
where $\Yscr$ is polyhedral. In contrast, we also allow $\Kscr$ to
contain second-order cones. Therefore, any quadratic objective terms
in the \citeauthor{aravkin2013sparse}\ definition can be ``lifted''
and turned into a linear term with a second-order-cone constraint (see
Example~\ref{ex:2-norm}). Our definition is thus no less general.

This expressive class of functions includes many penalty functions
commonly used in machine learning; \citeauthor{aravkin2013sparse}\
give many other examples. In addition, they show how to interpret QS
functions as the negative log of an associated probability density,
which makes these functions relevant to maximum a posteriori
estimation.  In the remainder of this section we provide some examples
that illustrate various regularizing functions and constraints that
can be expressed as QS functions.

\begin{example}[1-norm regularizer] \label{L1_Example}

  The 1-norm has the QS representation
  \[
    \norm{x}_1 = \sup_y\set{y\T x | y\in\Ball_\infty},
  \]
  where
\begin{equation} \label{eq:l1-rep}
      A = \pmat{\phantom-I_n\\-I_n},
\quad b = -{\pmat{\one_n\\\one_n}},
\quad d= 0, \quad B=I_n,
\quad \Kscr = \Real_+^{2n},
\end{equation}
\end{example}

\begin{example}[2-norm] \label{ex:2-norm} This simple example
  illustrates how the QS representation~\eqref{eq:qs} can represent
  the 2-norm, which is not possible using the QS formulation described
  by \citet{aravkin2013sparse} where the constraints are
  polyhedral. With our definition, the 2-norm has the QS
  representation
\[
\norm{x}_2 =
\sup_y\set{y\T x | y\in\Ball_2} =
\sup_y\set{ y\T x | (1,y)\succeq_{\Kscr}0},
\]
where
\begin{equation}\label{eq:1}
        A = \pmat{0\\ I_n},
  \quad b = \pmat{1\\ 0},
  \quad d = 0,
  \quad B = I_n,
  \quad \Kscr = \Qcone^{n+1}.
\end{equation}
\end{example}

\begin{example}[Polyhedral norms] \label{ex:norms} Any
  polyhedral seminorm is a support function, e.g., $\norm{Bx}_1$ for
  some matrix $B$. In particular, if the set $\set{y|Ay\ge b}$
  contains the origin and is centro-symmetric, then
  \[
    \norm{x} := \sup_y\set{ y\T Bx | Ay\ge b}
  \]
  defines a norm if $B$ is nonsingular, and a seminorm otherwise.
  This is a QS function with $d:=0$ (as will be the case for any
  positively homogeneous QS function) and $\Yscr:=\set{y|Ay\ge b}$.
\end{example}

\begin{example}[Quadratic function]\label{ex:quadratic-function}
  This example justifies the term ``quadratic'' in our modified
  definition, even though there are no explicit quadratic terms. It
  also illustrates the roles of the terms $B$ and $d$.  The quadratic
  function can be written as
  \[
  \half\norm{x}_2^2
  = \sup_{y,\,t}\Set{\! y\T x - \half t | \norm{y}^2_2\le t\! }
  = \sup_{y,\,t}
    \Set{\!
      \pmat{y\\t}\T\left[\pmat{I_n\\0}x-\pmat{0\\\nicefrac{1}{2}}\right] |
      \norm{y}_2^2\le t
    \!}.
  \]
  Use the derivation in \ref{sec:qs-soc} to obtain the QS
  representation with parameters
  \[
          A = \pmat{0&\nicefrac{1}{2}\\0&\nicefrac12\\I_n&0},
    \quad b = \pmat{{\phantom-\nicefrac12}\\-\nicefrac12\\\phantom-0},
    \quad d = \pmat{0\\\nicefrac12},
    \quad B = \pmat{I_n\\0},
    \quad \Kscr = \Qcone^{n+2}.
\]
\end{example}

\begin{example}[1-norm constraint]
  \label{ex:1-norm-ball}
  This example is closely related to the 1-norm regularizer in
  Example~\ref{L1_Example}, except that the QS function is used to
  express the constraint $\norm{x}_1\le 1$ via an indicator function
  $g=\delta(\,\cdot\mid\Ball_1)$.  Write the indicator to the 1-norm
  ball as the conjugate of the infinity norm, which gives
  \begin{equation}
    \label{eq:10}
\begin{aligned}
  \delta(x\mid \Ball_1)
     &= \sup_{y}\set{y\T x - \|y\|_\infty}
  \\ &= \sup_{y,\,\tau}\set{y\T x-\tau | y\in\tau\Ball_\infty }
      = \sup_{y,\,\tau}\set{y\T x-\tau | -\tau\one_n \leq y\leq \tau\one_n }.
\end{aligned}
\end{equation}
This is a QS function with parameters
\[
        A = \pmat{-I_n&\one_n\\ \phantom-I_n & \one_n},
\quad b = 0,
\quad d = \pmat{\phantom-0\\-1},
\quad B = \pmat{I_n\\0},
\quad \Kscr = \Real^{2n}.
\]
\end{example}

\begin{example}[Indicators on polyhedral cones]
Consider the following polyhedral cone and its polar:
\[
  U=\set{x|Bx\le0} \text{and} U^\circ=\set{B\T y|y\le 0}.
\]
Use the support-function representation of a cone in terms of its
polar to obtain
\begin{equation}\label{eq:12}
\delta(x\mid U) = \delta^*(\cdot\mid U^\circ)(x)
                = \sup_y \set{y\T B x | y\leq 0 },
\end{equation}
which is an example of an elementary QS function. (See
\citet{RTRW:1998} for definitions of the polar of a convex set, and
the convex conjugate.) A concrete example is the positive orthant,
obtained by choosing $B = I_n$. An important example, used in isotonic
regression~\cite{Best1990}, is the monotonic cone
$$
  U:=\set{x|x_i\ge x_j,\ \forall (i,j)\in\Escr},
$$
Here, $\Escr$ is the set of edges in a graph $\Gscr=(\Vscr,\Escr)$
that describes the relationships between variables in $\Vscr$. If we
set $B$ to be the incidence matrix for the graph, \eqref{eq:12} then
corresponds to the indicator on the monotonic cone $U$.
\end{example}

\begin{example}[Distance to a cone] The distance to a cone $U$ that is
  a combination of polyhedral and second-order cones can be
  represented as a QS function:
  \begin{align*}
  \inf_{x\in U}\|x-y\|_2
  &= \inf_{x}\ \set{\norm{x-y}_2+\delta(x\mid U)}
 \\ &= \big[ \delta(\cdot\mid \Ball_2)+\delta(\cdot\mid U^{\circ})\big]^{*}(y)
  =\ \sup \left\{ y\T x \mid y\in \Ball_2\cap U^{\circ}\right\}.
\end{align*}
The second equality follows from the relationship between infimal
convolution and conjugates \cite[\S16.4]{Roc70}.  When $U$ is the
positive orthant, for example, $g(x) = \|\max\{0,\,x\}\|_2,$ where the
\emph{max} operator is taken elementwise.
\end{example}

\section{Building quadratic-support functions}
\label{sec:qs-calculus}

Quadratic-support functions are closed under addition, composition
with an affine map, and infimal convolution with a quadratic
function. In the following, let $g_i$ be QS functions with parameters
$A_{i}$, $b_{i}$, $d_{i}$, $B_{i}$, and $\Kscr_i$ (with
$i=0,1,2$). The rules for addition and composition are described in
\cite{aravkin2013sparse}, which are here summarized and amplified.

\paragraph{Addition rule}
The function \[h(x):=g_1(x) + g_2(x)\] is QS with parameters
\begin{equation*}
       A = \pmat{A_1 & \\ & A_2},
\quad  b = \pmat{b_{1}\\b_{2}},
\quad  d = \pmat{d_{1}\\d_{2}},
\quad  B = \pmat{B_{1}\\B_{2}},
\quad  \Kscr = \Kscr_1\times \Kscr_2.
\end{equation*}

\paragraph{Concatenation rule} The function
\[h(x) := g_0(x^1) + \cdots + g_0(x^k),\]
where each partition $x^i\in\Real^n$, is QS with parameters
\vspace{-1.5mm}
\begin{equation}
  \label{Calculus-Rule-Concat}
        (A,\,B) = I_k \otimes (A_0,\,B_0),
\qquad  (b,\,d) = \one_k \otimes (b_0,\,d_0),
\qquad  \Kscr = \Kscr_0 \times \overset{(k)}{\cdots} \times \Kscr_0.
\end{equation}
where the symbol $\otimes$ denotes the Kronecker product. The rule for
concatenation follows from the rule for addition of QS functions.

\paragraph{Affine composition rule}
The function \[h(x) := g_0(Px-p)\] is QS with parameters
\begin{equation*}
       A = A_0,
\quad  b = b_0,
\quad  d = d_0 - B_0p,
\quad  B = B_0P.
\end{equation*}

\paragraph{Moreau-Yosida regularization} The Moreau-Yosida envelope of
$g_0$ is the value of the proximal operator, i.e.,
\begin{equation}\label{eq:4}
\env{H}{g_0}(z):=\inf_{x}\set{\tfrac{1}{2}\|z-x\|\H^{2}+g_0(x)}.
\end{equation}
It follows from \citet[Proposition~4.10]{BurkeHoheisel:2013} that
\[
 \env{H}{g_0}(z)
 = \sup_y\set{y\T(B_0x+d_0)-\half y\T B_0H\inv B_0\T y | A_0y\succeq_{\Kscr_0} b_0},
\]
which is a QS function with parameters
\begin{equation*}
       A = \pmat{0&\nicefrac{1}{2}\\0&\nicefrac12\\R&0\\A_0&0},
\quad  b = \pmat{\phantom-\nicefrac12\\-\nicefrac12\\0\\b_0},
\quad  d = \pmat{d_0\\-\nicefrac12},
\quad  B = \pmat{B_0\\0},
\quad  \Kscr = \Qcone^{n+2}\times \Kscr_0,
\end{equation*}
with Cholesky factorization $R\T R = B_0H\inv B_0^T$. The derivation
is given in \ref{sec:qs-soc}, where we take $Q=B_0H\inv B_0^T$.

\begin{example}[Sums of norms] \label{ex:sum-of-norms} In applications
  of group sparsity \citep{YuL06,jenatton2010proximal}, various norms
  are applied to all partitions of $x=(x^1,\ldots,x^p)$, which
  possibly overlap. This produces the QS function
  \begin{equation} \label{eq:sum-of-norms-g}
    g(x) = \|x^1\| + \cdots + \|x^p\|,
  \end{equation}
  where each norm in the sum may be different. In particular, consider
  the case of adding two norms
  $g(x)=\norm{x^1}_\triangledown + \norm{x^2}_\vartriangle$. (The
  extension to adding three or more norms follows trivially.) First,
  we introduce matrices $P_i$ that restrict $x$ to partition $i$,
  i.e., $x^i=P_ix$, for $i=1,2$. Then
  \[
    g(x) = \norm{P_1 x}_\triangledown + \norm{P_2 x}_\vartriangle.
  \]
  Then we apply the affine-composition and addition rules to
  determine the corresponding quantities that define the QS
  representation of $g$:
  \[
          A = \pmat{A_\triangledown \\& A_\vartriangle},
    \quad b = \pmat{b_\triangledown\\b_\vartriangle},
    \quad d = 0,
    \quad B = \pmat{B_\triangledown P_1\\B_\vartriangle P_2},
    \quad \Kscr = \Kscr_\triangledown \times \Kscr_\vartriangle,
  \]
  where $A_i$, $b_i$, $B_i$, and $\Kscr_i$ (with
  $i=\triangledown,\vartriangle$) are the quantities that define the
  QS representation of the individual norms. (Necessarily, $d=0$
  because the result is a norm and therefore positive homogeneous.) In
  the special case where $\norm{\cdot}_\triangledown$ and
  $\norm{\cdot}_\vartriangle$ are both the 2-norm, then $A_i$, $b_i$,
  $B_i$, and $\Kscr_i$ are given by~\eqref{eq:1} in
  Example~\ref{ex:2-norm}.
\end{example}

\begin{example}[Graph-based 1-norm and total variation]
  \label{ex:graph1-norm-example}

  A variation of Example~\ref{ex:sum-of-norms} can be used
  to define a variety of interesting norms, including the graph-based
  1-norm regularizer used in machine learning \citep{chin2013runtime},
  and the isotropic and anisotropic versions of total variation (TV),
  important in image processing \citep{LouZengOsherYin:2014}. Let
  \begin{equation*}
    g(x)=\|Nx\|_{G}
    \quad\text{with}\quad \|z\|_G = \sum_{i=1}^p \|z^i\|_2,
  \end{equation*}
  where $z^i$ is a partition of $z$ and $N$ is an $m$-by-$n$
  matrix. For anisotropic TV and the graph-based 1-norm regularizer,
  $N$ is the adjacency matrix of a graph, and each partition $z^i$ has
  a single unique element, so $g(x)=\norm{Nx}_1$.  For isotropic TV,
  each partition captures neighboring pairs of variables, and $N$ is a
  finite-difference matrix.  The QS addition and affine-composition
  rules can be combined to derive the parameters of $g$. When $p=m$
  (i.e., each $z^i$ is a scalar), we are summing $n$ absolute-value
  functions, and we use \eqref{eq:l1-rep} and
  \eqref{Calculus-Rule-Concat} to obtain
  \begin{equation} \label{eq:qs-rep-graph-1-norm}
        A = I_m \otimes \pmat{\phantom-1\\-1},
  \quad b = \one_m \otimes \pmat{-1\\-1},
  \quad d = 0,
  \quad B = N,
  \quad \Kscr = \Real^{2m}_+.
  \end{equation}
  Now consider the variation where $p=m/2$, (i.e., each partition has
  size 2), which corresponds to summing $m/2$ two-dimensional
  2-norms. Use \eqref{eq:1} to obtain
\[
        A = I_{m/2} \otimes \pmat{0\\ I_2},
  \quad b = \one_{m/2} \otimes \pmat{1\\ 0},
  \quad d = 0,
  \quad B = N,
  \quad \Kscr = \Qcone^2 \times \overset{(m/2)}{\cdots} \times \Qcone^2.
\]
\end{example}

\section{The proximal operator as a conic QP} \label{sec:prox-to-qp}

We describe in this section how the proximal map~\eqref{eq:prox-map}
can be obtained as the solution of a quadratic optimization problem
(QP) over conic constraints,
\begin{align} \label{eq:qp}
  \minimize{y}\quad\tfrac12 y\T Qy-c\T y \quad\st\quad Ay\succeq_\Kscr b,
\end{align}
for some positive semidefinite $\ell$-by-$\ell$ matrix $Q$ and a
convex cone $\Kscr=\Kscr_1\times\cdots\times\Kscr_k$.  The
transformation to a conic QP is not immediate because the
definition of the QS function implies that the proximal map involves
nested optimization. Duality, however, furnishes a means for
simplifying this problem.

\begin{prop} \label{prop:prox-to-qp} Let $g$ be a QS function. The following problems are dual pairs:
\begin{subequations} \label{eq:dual-probs}
\begin{alignat}{2}
  &\minimize{x} &\quad  &\half\norm{z-x}^2\H+g(x),
  \label{eq:primal-prob}
\\
  &\minimize{Ay\succeq_\Kscr b} &\quad &\half y\T BH^{-1}B^{T} y-(d+Bz)\T y.
  \label{eq:dual-prob}
\end{alignat}
\end{subequations}
If strong duality holds, the corresponding primal-dual solutions are
related by
\begin{equation} \label{eq:3}
  Hx+B\T y=Hz.
\end{equation}
\end{prop}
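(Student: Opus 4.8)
The plan is to substitute the conjugate representation of $g$ into \eqref{eq:primal-prob}, treat the result as a convex--concave saddle function, and read off both \eqref{eq:dual-prob} and the relation \eqref{eq:3} from elementary minimax (equivalently, Fenchel--Rockafellar) duality.

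First I would replace $g(x)=\sup_{y\in\Yscr}y\T(Bx+d)$ in \eqref{eq:primal-prob}, turning it into $\inf_x\sup_{y\in\Yscr}\phi(x,y)$ with the saddle function
\[
  \phi(x,y)\defd\half\norm{z-x}^2\H+y\T(Bx+d).
\]
This $\phi$ is convex in $x$ and affine (hence concave) in $y$, and $\Yscr=\set{y|Ay\succeq_\Kscr b}$ is nonempty by hypothesis and is closed and convex, being the preimage of the closed convex cone $\Kscr$ under an affine map. For fixed $y$, $\phi(\cdot,y)$ is smooth and strongly convex because $H\succ0$, so its unique minimizer is characterized by $\grad{x}\phi(x,y)=-H(z-x)+B\T y=0$, i.e., $Hx+B\T y=Hz$ --- precisely \eqref{eq:3}. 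Substituting this minimizer back and simplifying gives $\inf_x\phi(x,y)=(d+Bz)\T y-\half y\T BH\inv B\T y$.

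With the inner infimum in hand, I would invoke the (unconditional) max--min inequality
\[
  \inf_x\sup_{y\in\Yscr}\phi(x,y)\;\ge\;\sup_{y\in\Yscr}\inf_x\phi(x,y)\;=\;\sup_{y\in\Yscr}\Big\{(d+Bz)\T y-\half y\T BH\inv B\T y\Big\}.
\]
The left side is the optimal value of \eqref{eq:primal-prob}, the right side is the negative of the optimal value of \eqref{eq:dual-prob}; this identifies \eqref{eq:dual-prob} as the dual, and it has the conic-QP form \eqref{eq:qp} with $Q=BH\inv B\T\succeq0$. The same pairing follows formally from Fenchel--Rockafellar duality with $f(x)=\half\norm{z-x}^2\H$ and $h(u)=\delta^*(u+d\mid\Yscr)$, using $f^*(w)=w\T z+\half w\T H\inv w$ and, since $\Yscr$ is closed and convex, $h^*(y)=\delta(y\mid\Yscr)-y\T d$; the dual $\sup_y\set{-f^*(-B\T y)-h^*(y)}$ then reproduces the bracketed supremum.

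Finally, under the strong-duality hypothesis, if $x^*$ solves \eqref{eq:primal-prob} and $y^*$ solves \eqref{eq:dual-prob}, the equality of optimal values makes $(x^*,y^*)$ a saddle point of $\phi$ on $\Real^n\times\Yscr$; in particular $x^*\in\argmin_x\phi(x,y^*)$, and the stationarity condition derived above forces $Hx^*+B\T y^*=Hz$, which is \eqref{eq:3}. (Equivalently, the Fenchel optimality condition $-B\T y^*\in\partial f(x^*)=\set{H(x^*-z)}$ gives the same identity.) I do not expect a genuine obstacle here: the only points needing care are the sign bookkeeping that aligns the min--min presentation \eqref{eq:dual-probs} with the usual max form of the dual, and the two structural facts --- $H\succ0$, so the inner minimization is solvable in closed form and $Q\succeq0$, and $\Yscr$ closed and convex, so the support-function/indicator conjugacy used in the Fenchel route is exact. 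Strong duality is assumed, so no constraint qualification need be verified, though it is worth noting that $\Yscr\ne\emptyset$ already makes the dual feasible.
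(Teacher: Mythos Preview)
Your proposal is correct and is essentially the same argument as the paper's: both identify the dual by pairing $\half\norm{z-x}\H^2$ with the support function $\sup_{y\in\Yscr}y\T(\cdot+d)$ and invoking Fenchel--Rockafellar (equivalently, minimax) duality, then recover \eqref{eq:3} from the first-order condition $x\in\argmin_x\{\half\norm{z-x}\H^2+y\T Bx\}$. The only cosmetic difference is that the paper quotes the Fenchel identity directly (via \citet[Prop.~5.3.8]{bertsekas2009convex}), whereas you unfold it explicitly through the saddle function $\phi$; your presentation is a bit more self-contained, but the substance is identical.
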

\begin{proof}
Let
\[
  h_1(x) := \tfrac{1}{2}\|x-z\|_{H}^{2}
  \text{and}
  h_2(x) : =\sup_{y\in\Yscr} \set{ y\T(x+d)}.
\]
If strong duality holds, it follows from
\citet[Prop.~5.3.8]{bertsekas2009convex} that
\begin{equation} \label{eq:strong-duality}
  \inf_{x}\ h_1(x) + h_2(Bx)
  =- \inf_{y}\ h_1^*(-B^T y) + h_2^*(y),
\end{equation}
where
\[
  h_1^{*}(y)  = \tfrac{1}{2}\|y\|_{H^{-1}}^{2}+z^{T}y
  \text{and}
  h_2^{*}(y)  = \delta(z \mid \Yscr)-d^{T}y
\]
are the Fenchel conjugates of $h_1$ and $h_2$, and the infima on
both sides are attained. (See \citet[\S12]{Roc70} for the convex
calculus of Fenchel conjugates.)  The right-hand side
of~\eqref{eq:strong-duality} is precisely the dual
problem~\eqref{eq:dual-prob}. It also follows from Fenchel duality that the pair $(x,y)$
is optimal only if
\[
  x \in \argmin_x\set{ h_1(x) + y\T B x }.
\]
Differentiate this objective to obtain~\eqref{eq:3}.
\end{proof}

Strong duality holds when
$B\cdot\mbox{ri dom}(h_{1})\cap\mbox{ri dom}(h_{2})\neq\emptyset.$
This holds, for example, when the interior of the domain of $g$ is nonempty, since
\[
  \mbox{int dom}(g)\neq\emptyset
  \iff
  \mbox{im}\,B\cap\mbox{int dom}(h_{2})\neq\emptyset
  \Rightarrow
  B\cdot\mbox{ri dom}(h_{1})\cap\mbox{ri dom}(h_{2})\neq\emptyset.
\]
In all of the examples in this paper, this holds true.
\section{Primal-dual methods for conic QP}

Proposition~\ref{prop:prox-to-qp} provides a means of evaluating the
proximal map of QS functions via conic quadratic optimization. There
are many algorithms for solving convex conic QPs, but primal-dual
methods offer a particularly efficient approach that can leverage the
special structure that defines the class of QS functions.  A detailed
discussion of the implementation of primal-dual methods for conic
optimization is given by \citet{vandenberghe:2010}; here we summarize
the main aspects that pertain to implementing these methods
efficiently in our context.

The standard development of primal-dual methods for~\eqref{eq:qp} is
based on perturbing the optimality conditions, which can be stated as
follows. The solution $y$, together with slack and dual vectors $s$
and $v$, must satisfy
\[
  Qy-A^{T}v = c,\quad v \succeq_\Kscr 0, \quad Sv = 0,
\]
where the matrix $S$ is block diagonal, and each $m_i$-by-$m_i$ block
$S_i$ is either a diagonal or arrow matrix depending on the type of
cone, i.e.,
\[
S_i = \begin{cases}
        \diag(s_i)  & \mbox{if $\Kscr_i=\Real^{m_i}_+$,}
      \\\arrow(s_i) & \mbox{if $\Kscr_{i}=\Qcone^{m_i}$,}
      \end{cases}
\qquad
\arrow(u) := \pmat{u_0 & \bar u^T \\ \bar u & u_0 I}
\text{for}
u = (u_0, \bar u).
\]
See \citet{vandenberghe:2010} for further details.

Now replace the complementarity condition $Sv=0$ with its perturbation
$Sv=\mu e $, where $\mu$ is a positive parameter and
$e=(e^1,\ldots,e^k)$, with each partition defined by
\[
e^i =
\begin{cases}
  (1,1,\ldots,1) & \mbox{if $\Kscr_{i}=\Real_+^{m_i}$,}
\\(1,0,\ldots,0) & \mbox{if $\Kscr_{i}=Q^{m_i}$.}
\end{cases}
\]
A Newton-like method is applied to the perturbed optimality
conditions, which we phrase as the root of the function
\begin{equation}\label{eq:int_residual}
R_{\mu}:\pmat{y\\v\\s}
\mapsto
\pmat{r_d\\r_p\\r_\mu} :=
\pmat{
  Qy -A\T v -c
\\Ay-s-b
\\Sv-\mu e
}.
\end{equation}
Each iteration of the method proceeds by systematically choosing the
perturbation parameter $\mu$ (ensuring it decreases), and obtaining
each search direction as the solution of the Newton system
\begin{equation} \label{eq:5}
\begin{pmatrix}
        Q & -A^{T} & 0
      \\A & 0 & -I
      \\0 & S & V
\end{pmatrix}
\pmat{
  \Delta y\\
  \Delta v\\
  \Delta s
}
=-{\pmat{r_d\\r_p\\r_\mu}},
\qquad
\begin{pmatrix}y^{+}\\v^{+}\\s^{+}\end{pmatrix}
=
\pmat{y\\v\\s}
+\alpha
\pmat{\Delta y\\\Delta v\\\Delta s}.
\end{equation}
The steplength $\alpha$ is chosen to ensure that $(v^+,s^+)$ remain
in the strict interior of the cone.

One approach to solving for the Newton direction is to apply block
Gaussian elimination to~\eqref{eq:5}, and obtain the search direction
via the following systems:
\begin{subequations}
\begin{align}
   (Q+A\T S^{-1}VA)\Delta x & =r_d+A\T S^{-1}(V r_p+r_\mu),
                             \label{eq:newton-sc}
\\ \Delta v & =S^{-1}(Vr_p+r_\mu-VA\Delta x),
\\ \Delta s & =V^{-1}\left(r_\mu-S\Delta v\right).
\end{align}
\end{subequations}
In practice, the matrices $S$ and $V$ are rescaled at each iteration
in order to yield search directions with favorable properties. In
particular, the Nesterov-Todd rescaling redefines $S$ and $V$ so that
$S V\inv = \block(u)$ for some vector $u$, where
\begin{equation} \label{eq:block-diag-def}
  \block(u)_i =
  \begin{cases}
    \diag(u_i)                      & \mbox{if $\Kscr_{i} = \Re_+^{m_i}$,}
  \\(2 u_i u_i^T- [u_i\T Ju_i] J)^2 & \mbox{if $\Kscr_{i} = \Qcone^{m_i}$,}
  \end{cases}
  \qquad
  J = \pmat{
1 & 0\\
0 & -I_{(m_i-1)}
}.
\end{equation}
The cost of the overall approach is therefore determined by the cost of
solving, at each iteration, linear systems with the matrix
\label{InteriorL}
\begin{equation}\label{eq:9}
\Lscr(u) := Q+A^T \mbox{\bf{block}}(u)^{-1} A,
\end{equation}
which now defines the system~\eqref{eq:newton-sc}.

\section{Evaluating the proximal operator} \label{sec:eval-prox-oper}

We now describe how to use Proposition~\ref{prop:prox-to-qp} to
transform a proximal operator~\eqref{eq:2} into a conic QP that can be
solved by the interior algorithm described in~\S\ref{InteriorL}. In
particular, to evaluate $\prox{H}{g}(x)$ we solve the conic
QP~\eqref{eq:qp} with the definitions
\begin{equation} \label{eq:8}
 Q := BH^{-1}B^T, \qquad c := d + Bx;
\end{equation}
the other quantities $A$, $b$, and the cone $\Kscr$, appear
verbatim. Algorithm~\ref{algo:proxAlgo} summarizes the procedure.  As
we noted in \S\ref{InteriorL}, the main cost of this procedure is
incurred in Step~1, which requires repeatedly solving linear systems
that involve the linear operator~\eqref{eq:9}. Together
with~\eqref{eq:8}, these matrices have the form
\begin{equation} \label{eq:L-def}
 \Lscr(u)=BH^{-1}B^{T}+A\T\block(u)\inv A.
\end{equation}

\begin{algorithm}[t]
  \smallskip{}
  \begin{tabular}{@{}l@{\ }l}
    \textsc{Input} &: $x$, $H$, and QS function $g$ as defined by parameters
                             $A$, $b$, $d$, $B$, $\Kscr$
    \\[4pt]
    \textsc{Output}&: $\prox{H}{g}(x)$
  \end{tabular}

  \smallskip{}\smallskip{}

  \textbf{Step 1}: Apply interior method to QP~\eqref{eq:dual-prob} to
  obtain $\ystar$.

  \smallskip{}

  \textbf{Step 2}: Return $H^{-1}(c-B\T \ystar)$.\smallskip{}

  \caption{Evaluating $\prox H g(x)$}
  \label{algo:proxAlgo}
\end{algorithm}

Below we offer a tour of several examples, ordered by level of
simplicity, to illustrate the details involved in the application of
our technique.  The Sherman-Woodbury (SW) identity
\[
 (D + UMU^T)\inv = D\inv - D\inv U(M\inv+U\T D\inv U)\inv U\T D\inv,
\]
valid when $M\inv+U\T D\inv U$ is nonsingular, proves useful for
taking advantage of certain structured matrices that arise when
solving~\eqref{eq:L-def}. Some caution is needed, however, because
it is known that the SW identity can be numerically unstable
\citep{yip:1986}.%

For our purposes, it is useful to think of the SW formula as a routine
that takes the elements ($D,U,M$) that define a linear operator
$D + UMU^T$, and returns the elements ($D_1, U_1, M_1$) that define
the inverse operator $D_1 +U_1M_1U_1^T = (D + UMU^T)^{-1}$. We assume
that $D$ and $M$ are nonsingular. The following pseudocode summarizes
the operations needed to compute the elements of the inverse operator.

\smallskip

\RestyleAlgo{plain}
\begin{algorithm}[H]
  \SetKwInOut{Input}{input}\SetKwInOut{Output}{output}
  \SetKwProg{Fn}{function}{}{end}
  \SetAlgoNoLine
  \DontPrintSemicolon
  \Fn{\tt SWinv($D, U, M$)}{
  \nl $D_1 \gets D\inv$\;
  \nl $U_1 \,\gets D_1 U$\;
  \nl $M_1 \gets (M\inv + U\T U_1)\inv$\;
  \nl\KwRet{$D_1$, $U_1$, $M_1$}
  }
\end{algorithm}

\smallskip

\noindent Typically, $D$ is a structured operator that admits a fast
algorithm for solving linear systems with any right-hand side, and $U$
and $M$ are stored explicitly as dense
matrices. %
Step~1 above computes a new operator $D_1$ that simply interchanges
the multiplication and inversion operations of $D$. Step~2 applies the
operator $D_1$ to every column of $U$ (typically a tall matrix with
few columns), and Step~3 requires inverting a small matrix.

\begin{example}[1-norm regularizer; cf.\@ Example~\ref{L1_Example}]
\label{L1_Example-cont}

Example~\ref{L1_Example} gives the QS representation for
$g(x)=\norm{x}_1$, and the required expressions for $A$, $B$, and
$\Kscr$. Because $\Kscr$ is the nonnegative orthant,
$\block(u) = \diag(u)$; cf.~\eqref{eq:block-diag-def}. With the
definitions of $A$ and $B$, the linear operator $\Lscr$
in~\eqref{eq:L-def} simplifies to
\begin{align*}
  \Lscr(u) = H^{-1}+A\T\diag(u)A
           = H^{-1}+\Sigma,
\end{align*}
where $\Sigma$ is a positive-definite diagonal matrix that depends on
$u$. If it happens that the preconditioner $H$ has a special structure
such that $H+\Sigma\inv$ is easily invertible, it may be convenient to
apply the SW identity to obtain equivalent formulas for the inverse
\[
  \Lscr(u)^{-1} = (H\inv+\Sigma)\inv = H - H (H+\Sigma\inv)\inv H.
\]
Banded, chordal, and diagonal-plus-low-rank matrices are examples of
specially structured matrices that make one of these formulas for
$\Lscr\inv$ efficient. They yield the efficiency because subtracting
the diagonal matrix $\Sigma$ preserves the structure of either $H$ or
$H\inv$.
\end{example}

In the important special case where $H=\diag(h)$ is diagonal,
each component $i$ of the proximal operator for the 1-norm can be
obtained directly via the formula
\[
 [\prox{H}{g}(x)]_i =\sign(x_i)\cdot\max\{\abs{x_i}-1/h_{ii},\, 0\},
\]
$h_{ii}$ are the diagonal components of $H$. This corresponds to the
well-known soft-thresholding operator. There is no simple formula,
however, when $H$ is more general.

\begin{example}[Graph-based 1-norm]
  \label{ex:graph-1-norm}

  Consider the graph-based 1-norm function from
  Example~\ref{ex:graph1-norm-example} induced by a graph $\Gscr$
  with adjacency matrix $N$. Substitute the definitions of $A$ and
  $B$ from \eqref{eq:qs-rep-graph-1-norm} into the formula for $\Lscr$
  and simplify to obtain
  \begin{equation*}
    \Lscr(u) = NH\inv N^T + A\T \diag(u)A = NH\inv N^T + \Sigma,
  \end{equation*}
  where $\Sigma:=A\T\diag(u)A$ is a positive-definite diagonal
  matrix. (As with Example~\ref{L1_Example-cont}, $\Kscr$ is the
  positive orthant, and thus $\block(u) =
  \diag(u)$.) %
  Linear systems of the form $\Lscr(u)p=q$ then can be solved with the
  following sequence of operations, in which we assume that
  $H=\Lambda+UMU^T$, where $\Lambda$ is diagonal.

  \begin{algorithm}
    \SetKwInOut{Input}{input}\SetKwInOut{Output}{output}
    \DontPrintSemicolon
    \nl$(\Lambda_1, U_1, M_1) \gets \SWinv(\Lambda, U, M)$
    \Comment*{$H\inv \equiv \Lambda_1 + U_1M_1U_1^T$}
    \nl$\Sigma_1 \gets N\Lambda_1N^T + \Sigma$\;
    \nl$(\Sigma_2, U_2, M_2) \gets \SWinv(\Sigma_1, NU_1, M_1)$
    \Comment*{$\Lscr(u)\inv\equiv \Sigma_2+U_2M_2U_2^T$}
    \nl$p \gets \Sigma_2 q + U_2 M_2 U_2\T q$ \Comment*{solve
      $\Lscr(u)p=q$} \nl\KwRet{$p$}
  \end{algorithm}

  \noindent Observe from the definition of $H$ and the definition of
  $\Sigma_1$ in Step 2 above that
  \[
    \Lscr(u) = \Sigma_1 + NU_1 M_1 U_1^T N^T,
  \]
  and then Step~3 computes the quantities that define the inverse of
  $\Lscr$.  The bulk of the work in the above algorithm happens in
  Step~3, where $\Sigma_2\equiv\Sigma_1\inv$ is applied to each
  column of $NU_1$ (see Step~2 of the \SWinv\ function), and in
  Step~4, where $\Sigma_2$ is applied to $q$.  Below we give two
  special cases where it is possible to take advantage of the
  structure of $N$ and $H$ in order to apply $\Sigma_2$ efficiently to
  a vector.

  \smallskip

\begin{description}
\item[1-dimensional total variation.] Suppose that the graph $\Gscr$
  is a path. Then the $(n-1)\times n$ adjacecy matrix is given by
  \[
    N =  \pmat{-1 & 1\\ & \ddots& \ddots\\&&-1 & 1}.
  \]
  The matrix $\Sigma_1:=N\Lambda\inv N^T+\Sigma$ (see Step~2 of the
  above algorithm) is tridiagonal, and hence equations of the form
  $\Sigma_1q = p$ can be solved efficiently using standard techniques,
  e.g., \citet[Algorithm 4.3.6]{GoluLoan:1989}.

\smallskip

\item[Chordal graphs.] If the graph $\Gscr$ is chordal, than the
  matrix $N\T D N$ is also chordal when $D$ is diagonal. This implies
  that it can be factored in time linear with the number of edges of
  the graph \citep{Andersen:2010}. We can use this fact to apply
  $\Sigma_2\equiv\Sigma_1\inv$ efficiently, as follows: let
  $(\Sigma_3, U_3, M_3)=\SWinv(\Sigma, N, \Lambda_1)$, which implies
  \[
          \Sigma_2 := \Sigma_3 + U_3M_3U_3^T,
    \quad\mbox{where}
    \quad \Sigma_3 := \Sigma\inv,
    \quad M_3 := (N\T\Sigma\inv N + \Lambda_1)\inv.
  \]
  Because $N\T\Sigma\inv N$ is chordal, so is $M_3$, and any methods
  efficient for solving with chordal matrices can be used when
  applying $\Sigma_2$.
\end{description}
\end{example}

\begin{example}[1-norm constraint; cf.\@ Example~\ref{ex:1-norm-ball}]
\label{ex:1-norm-ball-v2}

Example~\ref{ex:1-norm-ball} gives the QS representation for the
indicator function on the 1-norm ball. Because the constraints on $y$
in~\eqref{eq:10} involve only bound constraints,
$\block(u)=\diag(u)$. With the definitions of $A$ and $B$ from
Example~\ref{ex:1-norm-ball}, the linear operator $\Lscr$ has the form
\[
\Lscr(u)
=
\begin{pmatrix}0\\I_n\end{pmatrix}
H^{-1}
\begin{pmatrix} 0 & I_n\end{pmatrix}
+
\begin{pmatrix}\phantom-\one_n^T & \one_n^T \\ -I_n & I_n\end{pmatrix}
\begin{pmatrix}\diag(u_{1})\\ & \diag(u_{2})\end{pmatrix}
\begin{pmatrix}\one_n&-I_n\\\one_n&\phantom-I_n\end{pmatrix},
\]
where $u = (u_1, u_2)$. Thus, $\Lscr$ simplifies to
\[
  \Lscr(u)
  = \begin{pmatrix}
      \one_n\T u & (u^{-})\T
   \\ u^{-}  & H^{-1} + \Sigma
    \end{pmatrix}
    \textt{where}
    \Sigma:=\diag(u^{+}),
    \quad
    \begin{array}{c}
      u^{+}:=\phantom-u_{1}+u_{2},\\
      u^{-}:=-u_{1}+u_{2}.
    \end{array}
\]
Systems that involve $\Lscr$ can be solved by pivoting on the block
$(H\inv+\Sigma)$. The cases where this approach is efficient are
exactly those that are efficient in the case of
Example~\ref{L1_Example-cont}.
\end{example}

\begin{example}[2-norm; cf.\@ Example~\ref{ex:2-norm}]
  \label{ex:group-lasso-cont}

  Example~\ref{ex:2-norm} gives the QS representation for the 2-norm
  function. Because $\Kscr=Q^n$, then $\block(u)=(2uu^T-[u\T Ju]J)^2$,
  where $u = (u_0, \bar u)$ and $J$ is specified
  in~\eqref{eq:block-diag-def}. With the expressions for $A$ and $B$
  from Example~\ref{ex:2-norm}, the linear operator $\Lscr$ reduces to
  \begin{equation} \label{eq:lin-op-sum-of-squares}
    \Lscr(u) = H\inv+\alpha I_n + vv^T,
    \text{with}
    \alpha = (u\T J u)^2,\ v=\sqrt{8u_0}\cdot\bar u.
  \end{equation}
  This amounts to a perturbation of $H\inv$ by a multiple of the
  identity, followed by a rank-1 update. Therefore, systems that
  involve $\Lscr$ can be solved at the cost of solving systems with
  $H + \alpha I_n$ (for some scalar $\alpha$).

  Of course, the proximal map of the 2-norm is easily computed by
  other means; our purpose here is to use this as a building block for
  more useful penalties, such as Example~\ref{ex:sum-of-norms}, which
  involves the sum-of-norms function shown in
  \eqref{eq:sum-of-norms-g}.  Suppose that the $p$ partitions do not
  overlap, and have size $n_i$ for $i=1,\ldots,p$.  The operator
  $\Lscr$ in~\eqref{eq:lin-op-sum-of-squares} generalizes to
  \begin{equation*}
    \Lscr(u) = H^{-1}+
    \underset{W}{\underbrace{
    \begin{pmatrix}
      \alpha_{1}I_{n_1}+v^{1}(v^{1})^{T}\\
     & \ddots\\
     &  & \alpha_{p}I_{n_p}+v^{p}(v^{p})^{T}
    \end{pmatrix}
    }}
    \ ,\quad
    \begin{aligned}
      u^{i}  & =(u^i_{0},\bar{u}^{i})\\
      \alpha_{i}  &= (u^{iT} Ju^{i})^2\\
      v^{i} & =\sqrt{8u^i_{0}}\cdot \bar{u}^{i},
    \end{aligned}
  \end{equation*}
  where each vector $u_i$ has size $n_i+1$.

  When $p$ is large, we can treat each diagonal block of $W$ as an
  individual (small) diagonal-plus-rank-1 matrix. If $H\inv$ is
  diagonal-plus-low-rank, for example, the diagonal part of $H\inv$
  can be subsumed into $W$. In that case, each diagonal block in $W$
  remains diagonal-plus-rank-1, which can be inverted in parallel by
  handling each block individually. Subsequently, the inverse of
  $\Lscr$ can be obtained by a second correction.

  Another approach, when $p$ is small, is to consider $W$ as a
  diagonal-plus-rank-$p$ matrix:
  \[
    W =
    \begin{pmatrix}
    \alpha_{1}I_{n_1}\\
     & \ddots\\
     &  & \alpha_{p}I_{n_p}
    \end{pmatrix}
    +\begin{pmatrix}
      v^{1}\\ & \ddots\\ &  & v^{p}
    \end{pmatrix}
    \begin{pmatrix}
      v^{1}\\
      & \ddots\\
      &  & v^{p}
    \end{pmatrix}^T.
  \]
  This representation is convenient: systems involving $\Lscr$
  can be solved efficiently in a manner identical to that of
  Example~\ref{L1_Example-cont} because $W$ is a
  diagonal-plus-low-rank matrix.

\end{example}

\begin{example}[separable QS functions]
  \label{ex:separable} Suppose that $g$ is separable, i.e.,
\[
 g(x)=\gamma(x_1) + \cdots + \gamma(x_n),
\]
where $\gamma:\Real\to\Real$ is a QS function with parameters
$(A_\gamma,b_\gamma,B_\gamma,d_\gamma,\Real_+^{np})$, and $p$ is an
integer parameter that depends on $\gamma$. The parameters $A$ and $B$
for $g$ follow from the concatenation
rule~\eqref{Calculus-Rule-Concat}, and $A=(I_n\otimes A_\gamma)$ and
$B=(I_n\otimes B_\gamma)$. Thus, the linear operator $\Lscr$ is given
by
\begin{align*}
  \Lscr(u) =
  (I_n \otimes B_\gamma)H^{-1}(I_n \otimes B_\gamma)^{T}
  + (I_n \otimes A_\gamma)^{T}\diag(u)(I_n\otimes A_\gamma).
\end{align*}
Apply the SW identity to obtain
\[
\Lscr(u)^{-1} =
\Lambda^{-1}-\Lambda^{-1}(I_n\otimes B_\gamma)(H+\Sigma)^{-1}(I_n\otimes B_\gamma)^{T}\Lambda^{-1},
\]
where $\Lambda=\diag(\Lambda_1,\ldots,\Lambda_n)$,
\begin{align*}
  \Lambda_i =
  A_\gamma\T\diag(u^i)A_\gamma,
  \quad\text{and}\quad
  \Sigma = \diag(B_\gamma^{T}\Lambda_1^{-1}B_\gamma,\dots,B_\gamma^{T}\Lambda_n^{-1}B_\gamma).
\end{align*}
Because the function $\gamma$ takes a scalar input, $B_\gamma$ is a
vector. Hence $\Sigma$ is a diagonal matrix. Note too that $\Lambda$
is a block diagonal matrix with $n$ blocks each of size $p$. We can
then solve the system $\Lscr(u)p = q$ with the following steps:

\begin{algorithm}[H]
  \SetKwInOut{Input}{input}\SetKwInOut{Output}{output}
  \DontPrintSemicolon \nl
  $q_1 \gets (I_n \otimes B_\gamma)^T\Lambda^{-1}q$ \; \nl
  $q_2 \gets (H + \Sigma)^{-1}q_1$ \; \nl
  $q_3 \gets \Lambda^{-1}q_2 -\Lambda^{-1}(I_n\otimes B_\gamma)q_2$
\end{algorithm}

The cost of solving systems with the operator $\Lscr$ is dominated by
solves with the block diagonal matrix $\Lambda$ (Steps~1 and~3) and
$H + \Sigma$ (Step~2). The cost of the latter linear solve is explored
in Example~\ref{L1_Example-cont}.
\end{example}

\section{A proximal quasi-Newton method}
\label{sec:quasi-Newton}

We now turn to the proximal-gradient method discussed in \S\ref{introduction}. Our primary goal is to demonstrate the feasibility of the interior approach for evaluating proximal operators of QS functions. A secondary goal is to illustrate how this technique leads to an efficient extension of the quasi-Newton method for nonsmooth problems of practical interest.

We follow \citet{Scheinberg2016} and implement a limited-memory BFGS
(L-BFGS) variant of the proximal-gradient method that has no
linesearch and that approximately evaluates the proximal
operator. Scheinberg and Tang establish a sublinear rate of
convergence for this method when the Hessian approximations are
suitably modified by adding a scaled identity matrix, and when the
scaled proximal maps are evaluated with increasing accuracy. In their
proposal, the accuracy of the proximal evaluation is based on bounding
the value of the approximation to \eqref{eq:4}. We depart from this
criterion, however, and instead use the
residual~\eqref{eq:int_residual} obtained by the interior solver to
determine the required accuracy. In particular, we require that the
optimality criterion of the interior algorithm used to evaluate the
operator is a small multiplicative constant $\kappa$ of the current
optimality of the outer proximal-gradient iterate, i.e.,
\[
\|R_\mu(y,v,s)\| \leq \kappa \norm{x_k - \prox{}{g}(x_k-\nabla
  f(x_k))}.
\]
This heuristic is reminiscent of the accuracy required of the linear
solves used by an inexact Newton method for root
finding~\cite{DembEiseStei:1982}.  Note that the proximal map
$\prox{}{g}\equiv\prox{I}{g}$ used above is unscaled, which in many
cases can be easily computed when $g$ is seperable.

\subsection{Limited-memory BFGS updates} \label{sec:L-BFGS}

Here we give a brief outline the L-BFGS method for
obtaining Hessian approximations of a smooth function $f$. We follow
the notation of \citet[\S6.1]{NoW99}, who use $H\k$ to denote the
current approximation to the \emph{inverse} of the Hessian of $f$. Let
$x\k$ and $x\kp1$ be two consecutive iterates, and define the vectors
\[
  s_{k} = x_{k+1}-x_{k},
  \quad\text{and}\quad
  y_{k} = \nabla f(x_{k+1})-\nabla f(x_{k}).
\]
A ``full memory'' BFGS method updates the approximation $H_k$ via
the recursion
\begin{align*}
  H_{0}=\sigma I,
  \qquad
  H_{k+1} = H_{k}-\frac{H_{k}s_{k}s_{k}^{T}H_{k}}{s_{k}^{T}H_k s_k}+\frac{y_{k}y_{k}^{T}}{y_{k}^{T}s_{k}},
\end{align*}
for some positive parameter $\sigma$ that defines the initial
approximation. The limited-memory variant of the BFGS update (L-BFGS)
maintains the most recent $m$ pairs $(s\k,y\k)$, discarding older
vectors. In all cases, $m\ll n$, e.g., $m = 10$. The globalization
strategy advocated by \citet{Scheinberg2016} may add a small multiple
of the identity to $H_k$. This modification takes the place of a
potentially expensive linesearch, and the correction is increased at
each iteration if a certain condition for decrease is not satisfied.

Each interior iteration for evaluating the proximal operator depends
on solving linear systems with $\Lscr$ in~\eqref{eq:L-def}. In all of
the experiments presented below, each interior iteration has a cost
that is linear in the number of variables $n$.

\section{Numerical experiments}

We have implemented the proximal quasi-Newton method as a Julia
package \citep{BEKS14}, called \Qsip, designed for problems of the
form~\eqref{eq:6}, where $f$ is smooth and $g$ is a QS function. The
code is available at the URL
\begin{center}
  \url{https://github.com/MPF-Optimization-Laboratory/QSip.jl}
\end{center}
A primal-dual interior method, based on ideas from the CVXOPT software
package \citep{Andersen:2010}, is used for
Algorithm~\ref{algo:proxAlgo}. We consider below several examples. The
first three examples apply the \Qsip\ solver to minimize benchmark
least-squares problems with different nonsmooth regularizers that are
QS representable; the last example applies the solver to a sparse
logistic-regression problem on a standard data set.

\subsection{Timing the proximal operator} \label{sec:timeprox}

The examples that we explored in \S\ref{sec:eval-prox-oper} have a
favorable structure that allows each interior iteration for evaluating
the proximal map $\prox{H}{g}(x)$ to scale linearly with problem
size. In this section we verify this behavior empirically for problems
with the structure
\begin{equation} \label{eq:proxtime}
H = I + UU^T, \qquad g(x) = \|x\|_1, \qquad U \in \Real^{n \times k}
\end{equation}
for different values of $k$ and $n$.  This choice of
diagonal-plus-low-rank matrices is designed to mimic the structure of
matrices that appear in L-BFGS. Here $U$ and $x$ are chosen with
random normal entries. As described in Example~\ref{L1_Example}, the
system $\mathcal{L}(u)$ is inverted in linear time using the SW
identity.

We evaluate the proximal map on $100$ random instances for each
combination of $k$ and $n$, and plot in Figure 1 the average time
needed to reach an accuracy of $10^{-7}$, as measured by the
optimality conditions in the interior algorithm.  Because in practice
the number of iterations of the interior method is almost independent
of the size of the problem, the time taken to compute the proximal map
is a predictable, linear function of the size of the problem.

\begin{figure}
\centering
\includegraphics[scale = 0.5]{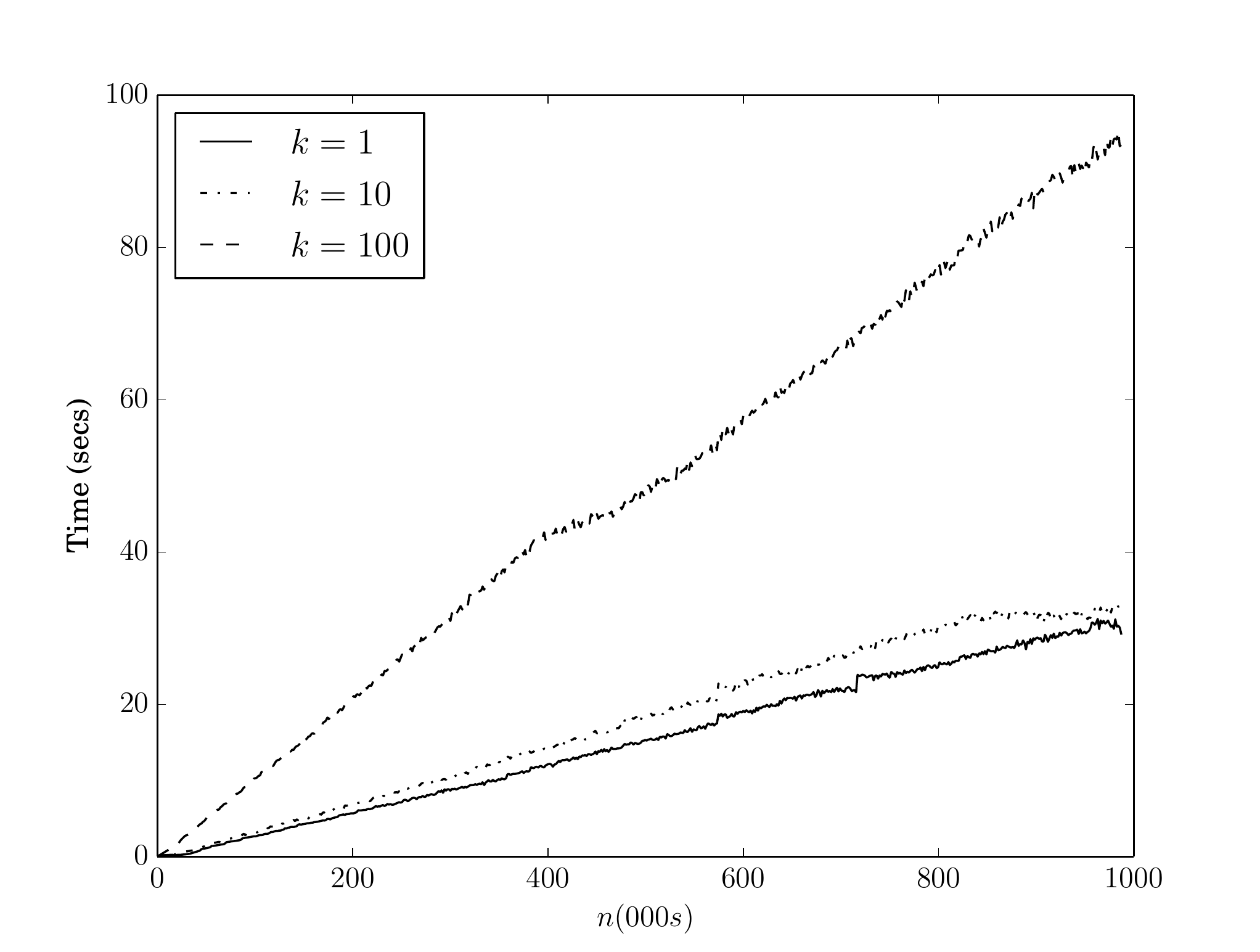} 
\caption{Time taken to compute $\prox{H}{g}(x)$ versus $n$, for
  $k=1, 10, 100$; see~\eqref{eq:proxtime}.}
\end{figure}

\subsection{Synthetic least-square problems}

The next set of examples all involve the least-squares objective
\begin{equation}\label{eq:ls-objective}
  f(x) = \half\norm{Ax-b}^2_2.
\end{equation}
Two different procedures are used to construct matrices $A$, as
described in the following sections. In all cases, we follow the
testing approach described by \citet{6399612} for constructing a test
problem with a known solution: fix a vector $x^{\star}$ and choose
$b=Ax^{\star}-A^{-T}v$, where $v\in\partial g(x^\star)$. Note that
\[
\partial(f+g)(x^\star)=A^{T}(Ax^\star-[Ax^{\star}-A^{-T}v])+\partial g(x^{\star})=\partial g(x^{\star})-v.
\]
Because $v\in \partial g(x^\star)$, the above implies that
$0\in\partial(f+g)(x^\star)$, and hence $x^\star$ minimizes the
objective $f+g$.  In the next three sections, we apply \Qsip\ in turn
to problems with $g$ equal to the 1-norm, the group LASSO (i.e., sum
of 2-norm functions), and total variation.

\subsubsection{One-norm regularization} \label{sec:1-norm-reg}

In this experiment we choose $g = \|\cdot\|_1$, which gives the 1-norm
regularized least-squares problem, often used in applications of
sparse optimization. Following the details in Example
\ref{L1_Example}, the system $\mathcal{L}(u)$ is a
diagonal-plus-low-rank matrix, which we invert using the SW identity.

The matrix $A$ in~\eqref{eq:ls-objective} is a 2000-by-2000 lower
triangular matrix with all nonzero entries equal to $1$. The bandwidth
$p$ of $A$ is adjustable, and determines its coherence
\[
\mbox{coherence}(A)
=
\max_{i\neq j}\frac{a_i\T a_j}{\|a_i\|\|a_j\|} = \sqrt{\frac{p-1}{p}},
\]
where $a_i$ is the $i$th column. As observed by \citet{6399612}, the
difficulty of 1-norm regularized least-squares problems are strongly
influenced by the coherence. Our experiments use matrices $A$ with
bandwidth $p=500, 1000, 2000$.

Figure~\ref{Exp_L1} shows the results of applying the \Qsip\ solver
with a memories $k=1,10$, labeled ``QSIP mem $= k$''. We also consider
comparisons against two competitive proximal-based methods. The first
is a proximal-gradient algorithm that uses the Barzilai-Borwein
steplength \cite{BarzBorw:1988,wright2009sparse}.  This is our own
implementation of the method, and is labeled ``Barzilai-Borwein'' in
the figures. The second is the proximal quasi-Newton method
implemented by \citet{NIPS2012_4523}, which is based on a
symmetric-rank-1 Hessian approximation; this code is labeled
``PG-SR1''. The \Qsip\ solver with memory of 10 outperforms the other
solvers. The quasi-Newton approximation benefits problems with high
coherence ($p$ large) more than problems with low coherence ($p$
small). In all cases, the experiments reveal that the additional cost
involved in evaluating a proximal operator (via an interior method) is
balanced by the overall cost of the algorithm, both in terms of
iterations (i.e., matrix-vector products with $A$) and time.

\begin{figure}[t]
  \label{Exp_L1}
  \centering
  \begin{tabular}{@{}c@{\ }c@{}}
   \includegraphics[width=0.485\textwidth]{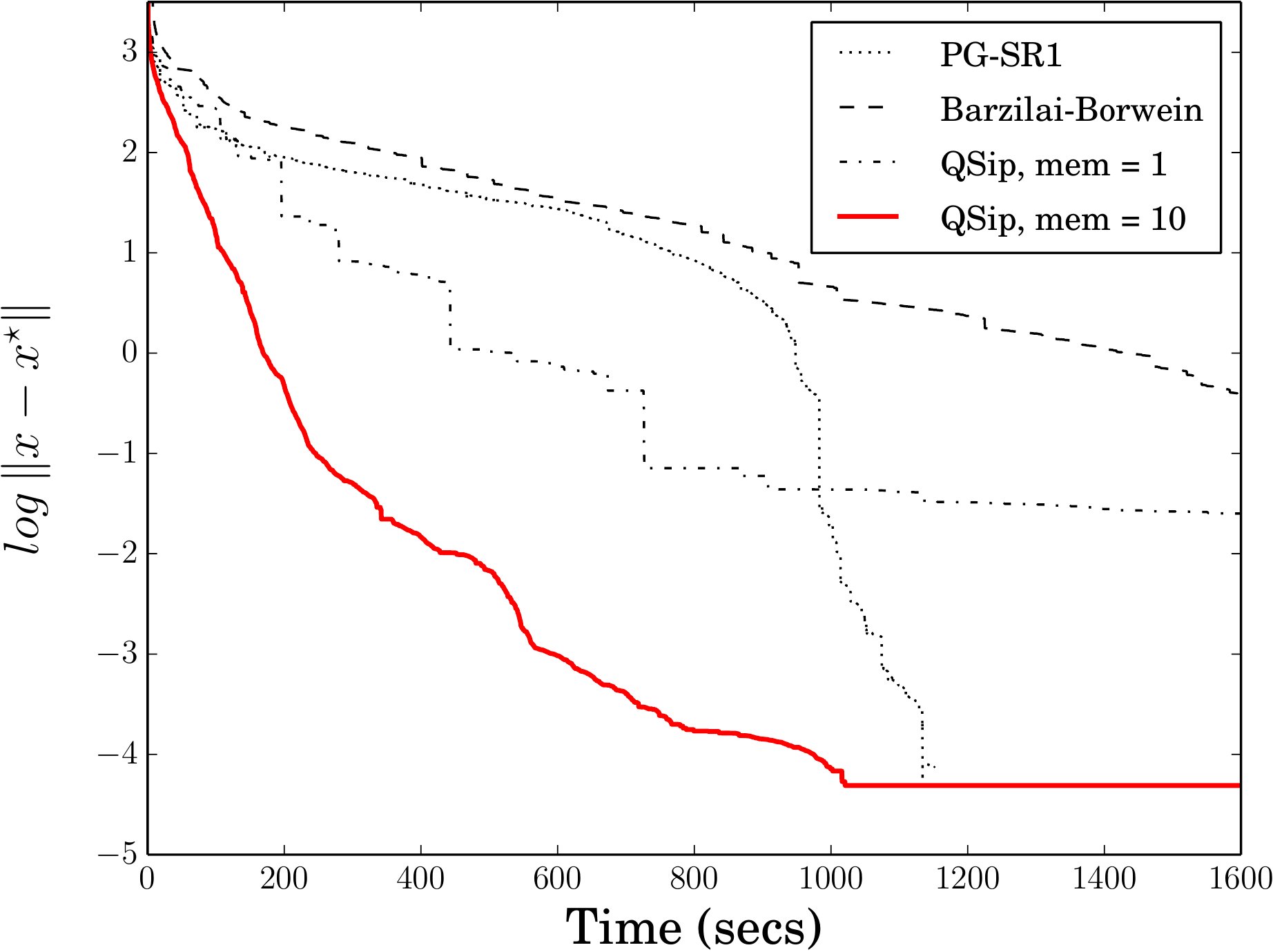}
  &\includegraphics[width=0.50\textwidth]{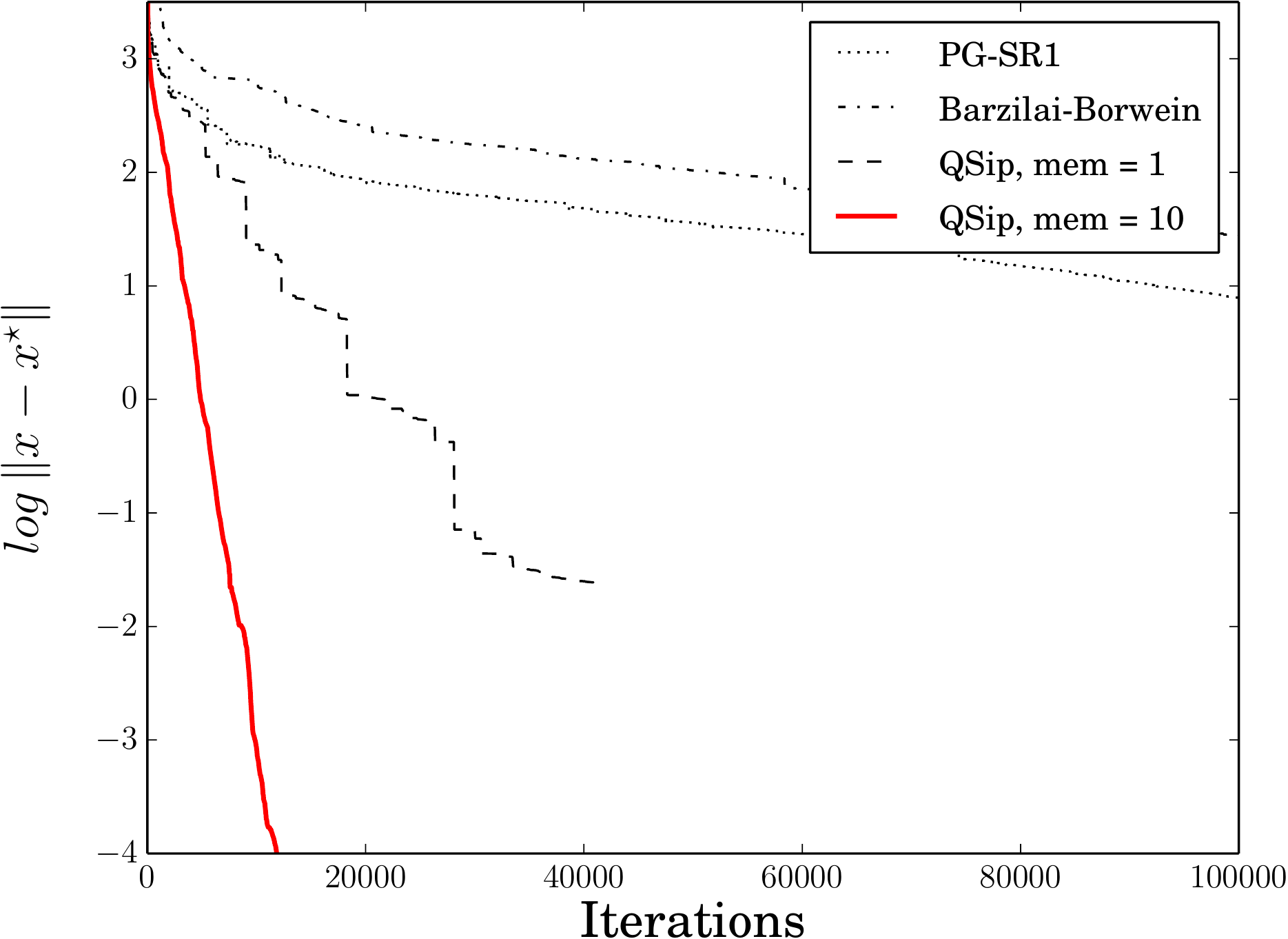}
 \\\includegraphics[width=0.485\textwidth]{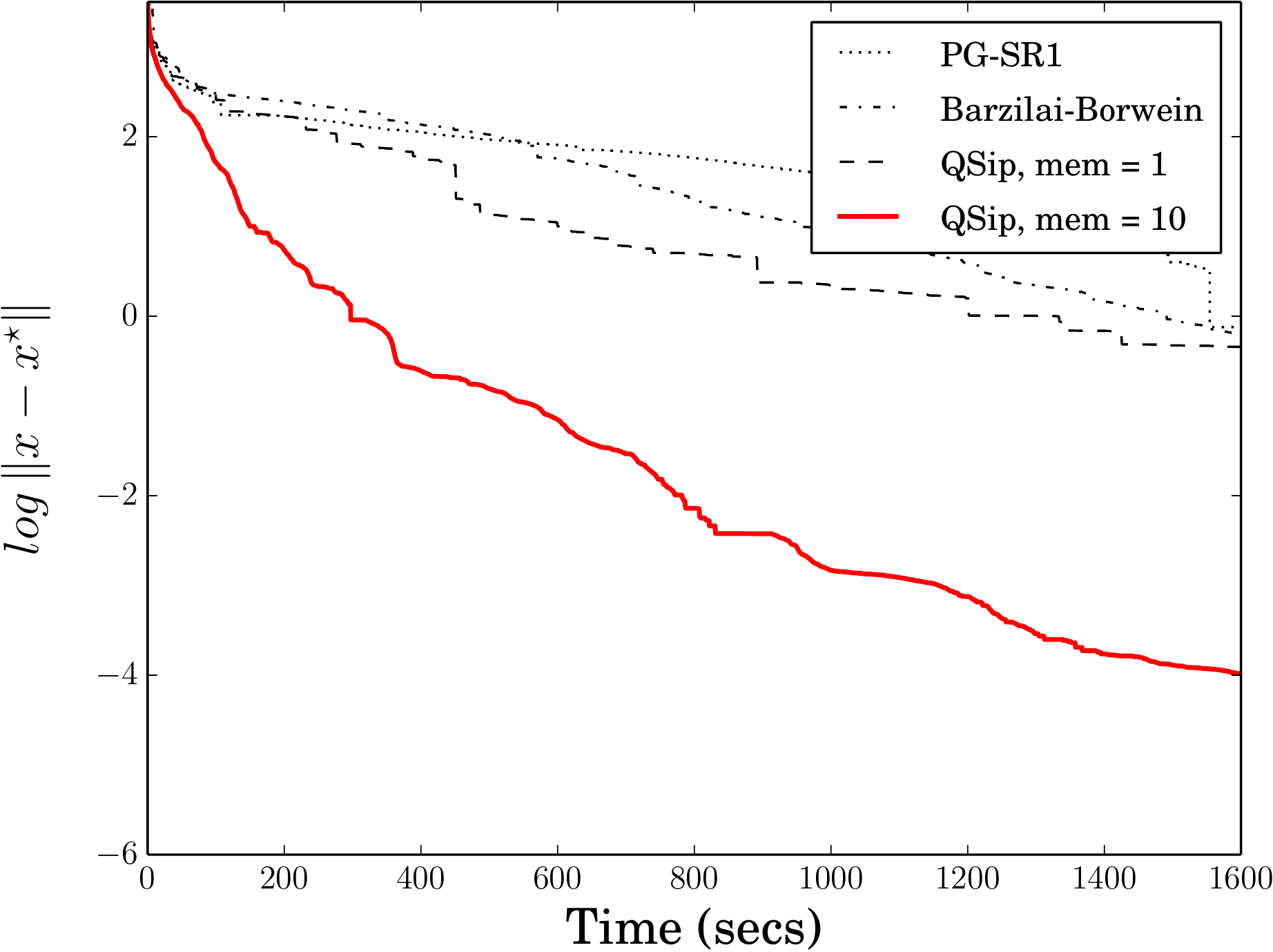}
  &\includegraphics[width=0.50\textwidth]{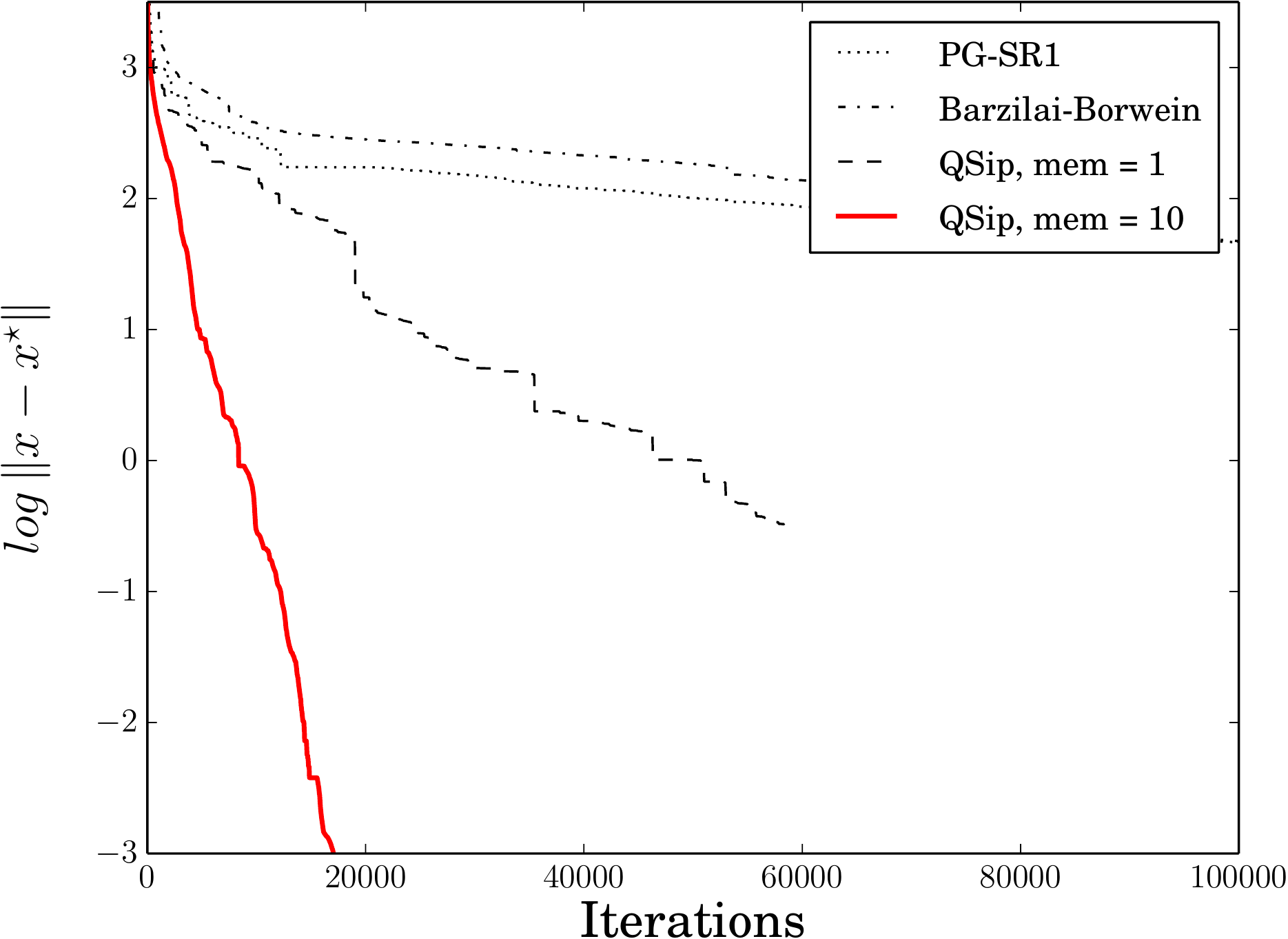}
 \\\includegraphics[width=0.485\textwidth]{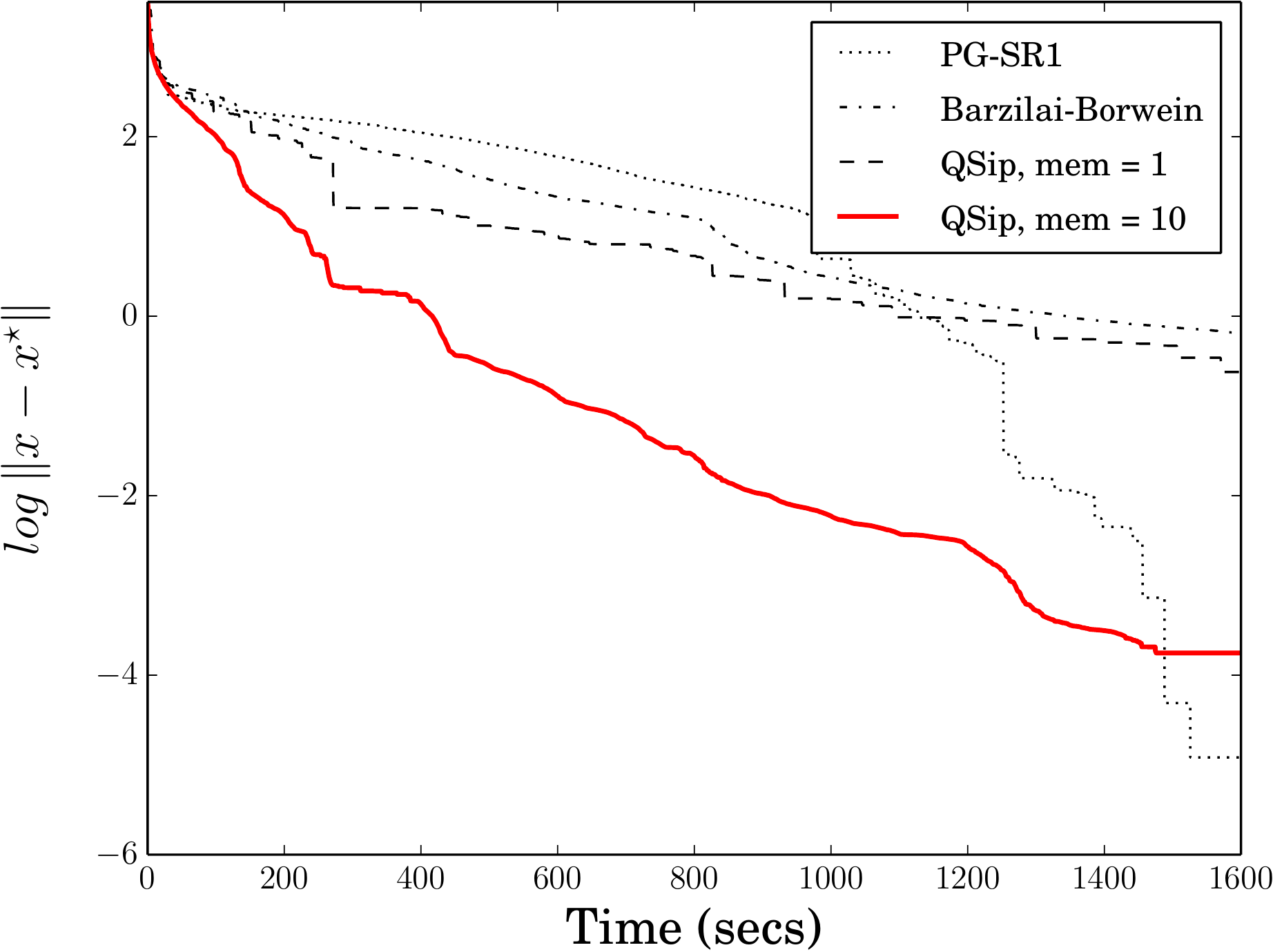}
  &\includegraphics[width=0.50\textwidth]{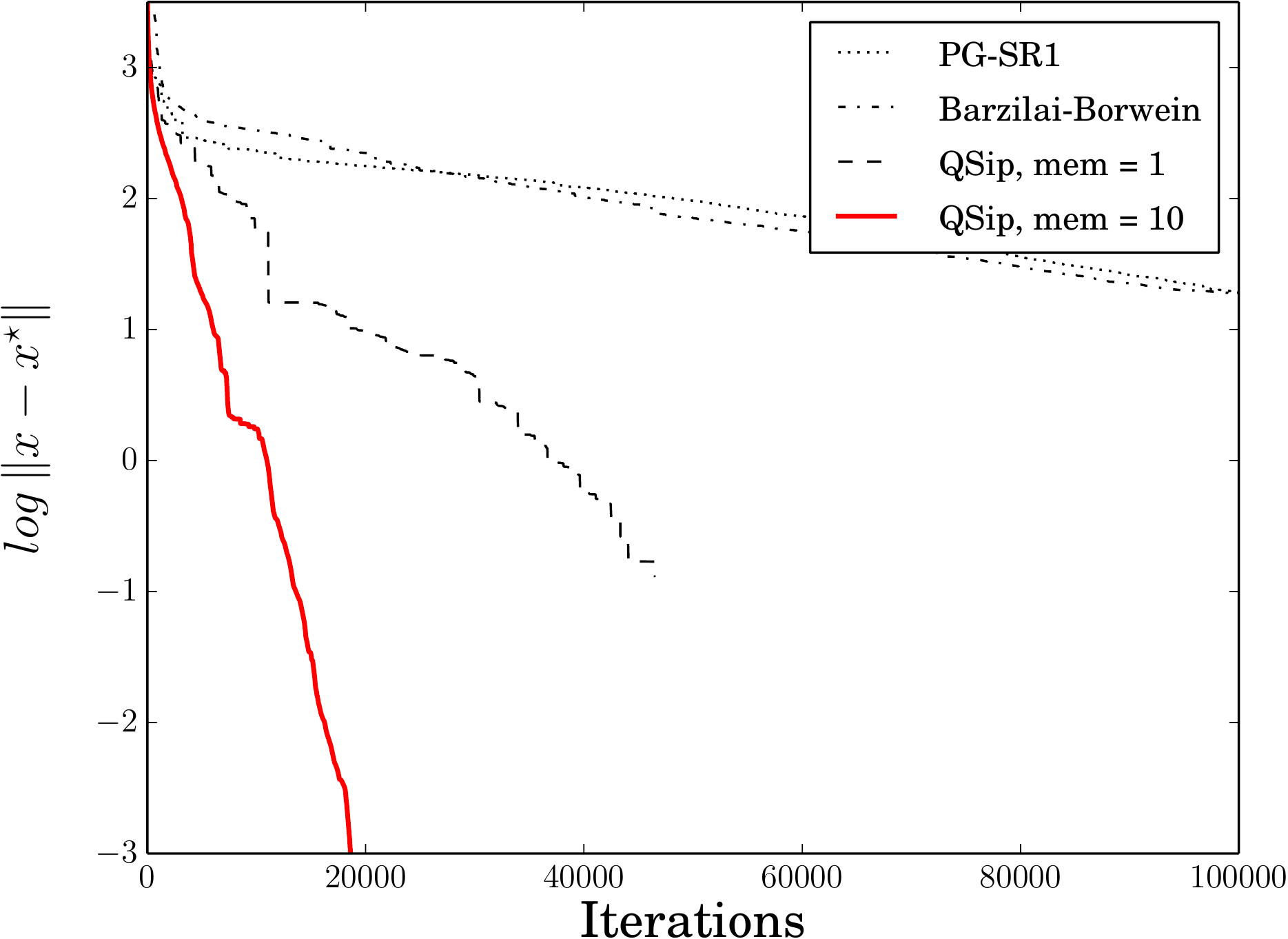}
  \end{tabular}
  \caption{Performance of solvers applied to 1-norm regularized
    least-squares problems of increasing difficulty. The left and
    right columns, respectively, track the distance of the current
    solution estimate to the true solution versus time and iteration
    number.  Top row: $p=2000$ (highest coherence); middle row:
    $p=1000$; bottom row: $p=500$ (lowest coherence). }

\end{figure}

\subsubsection{The effect of conditioning}
\label{sec:conditioning}

It is well known that the proximal-gradient method converges slowly
for ill conditioned problems. The proximal L-BFGS method may help to
improve convergence in such situations. We investigate the observed
convergence rate of the proximal L-BFGS approach on a family of
least-squares problems with 1-norm regularization with varying degrees
of ill conditioning. For these experiments, we take $A$
in~\eqref{eq:ls-objective} as the 2000-by-2000 matrix
\[
A = \alpha_L \begin{pmatrix}
 T & 0\\
0 & 0
\end{pmatrix} + \alpha_\mu I,
\]
where $T$ is a 1000-by-1000 tridiagonal matrix with constant diagonal
entries equal to 2, and constant sub- and super-diagonal entries equal
to ${-}1$. The parameter $\alpha\L/\alpha_\mu$ controls the
conditioning of $A$, and hence the conditioning of the Hessian $A\T A$
of $f$.

We run L-BFGS with 4 different memories (``mem''): $0$ (i.e., proximal
gradient with a Barzilai-Borwein steplength), $1$, $10$, and $100$.
We terminate the algorithm either when the error drops beneath
$10^{-8}$, or the method reaches $10^3$ iterations. Our method of
measuring the {observed convergence} (OC) computes the line of best
fit to the log of optimality versus $k$, which results in the quantity
\[
\mbox{Observed Convergence}
:= \frac{\sum_{k=0}^{N}k\cdot\log \|x_{k}-x_{*}\|}{\sum_{k=0}^{N} \log \|x_{k}-x_{*}\|},
\]
where $N$ is the total number of iterations.

The plot in Figure~\ref{fig:convergence} shows the ratio of the OC for
L-BFGS relative to the observed convergence of proximal gradient
(PG). This quantity can be interpreted the amount of work that a
single quasi-Newton step performs relative to the number of PG
iterations. The plot reveals that the quasi-Newton method is faster at
all condition numbers, but is especially effective for problems with
moderate conditioning. Also, using a higher quasi-Newton memory almost
always lowers the number of iterations. This benefit is most
pronounced when the problem conditioning is poor.

Together with \S\ref{sec:timeprox}, this section gives a broad picture
of the trade-off between the proximal quasi-Newton and proximal
gradient methods. The time required for each proximal gradient
iteration is dominated by the cost of the gradient computation because
the evaluation of the unscaled proximal operator is often trivial. On
the other hand, the proximal quasi-Newton iteration additionally
requires evaluating the scaled proximal operator.  Therefore, the
proximal quasi-Newton method is most appropriate when this cost is
small relative to the gradient evaluation.

\begin{figure}
\centering
\includegraphics[width=0.7\textwidth]{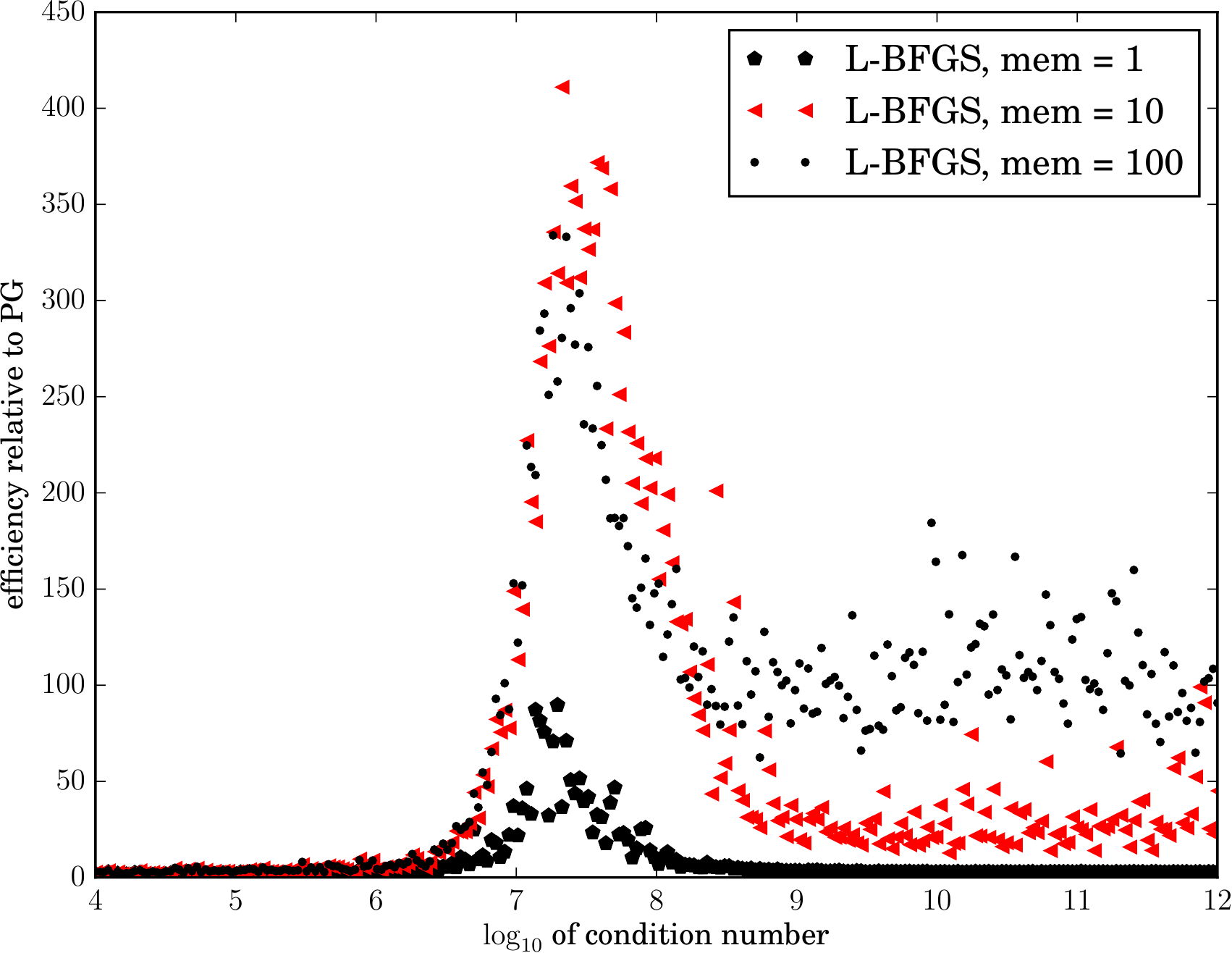}
\caption{Performance of the proximal quasi-Newton method relative to
  proximal gradient for problems of varying condition number.}
\label{fig:convergence}
\end{figure}

\subsubsection{Group LASSO}

Our second experiment is based on the sum-of-norms regularizer
described in Examples~\ref{ex:sum-of-norms}
and~\ref{ex:group-lasso-cont}. In this experiment, the $n$-vector
(with $n=2000$) is partitioned into $p=5$ disjoint blocks of equal
size. The matrix $A$ is fully lower triangular.

Figure~\ref{Exp_L2} clearly shows that the \Qsip\ solver outperforms
the PG method with the Barzilai-Borwein step size.  Although we
required \Qsip\ to exit with a solution estimate accurate within 6
digits (i.e., $\log\norm{x-x^*}\le 10^{-6}$), the interior solver
failed to achieve the requested accuracy because of numerical
instability with the SW formula used for solving the Newton
system. This raises the question of how to use efficient alternatives
to the SW update that are numerically stable and can still leverage
the structure of the problem.
\begin{figure}
  \centering
  \includegraphics[width=0.75\textwidth]{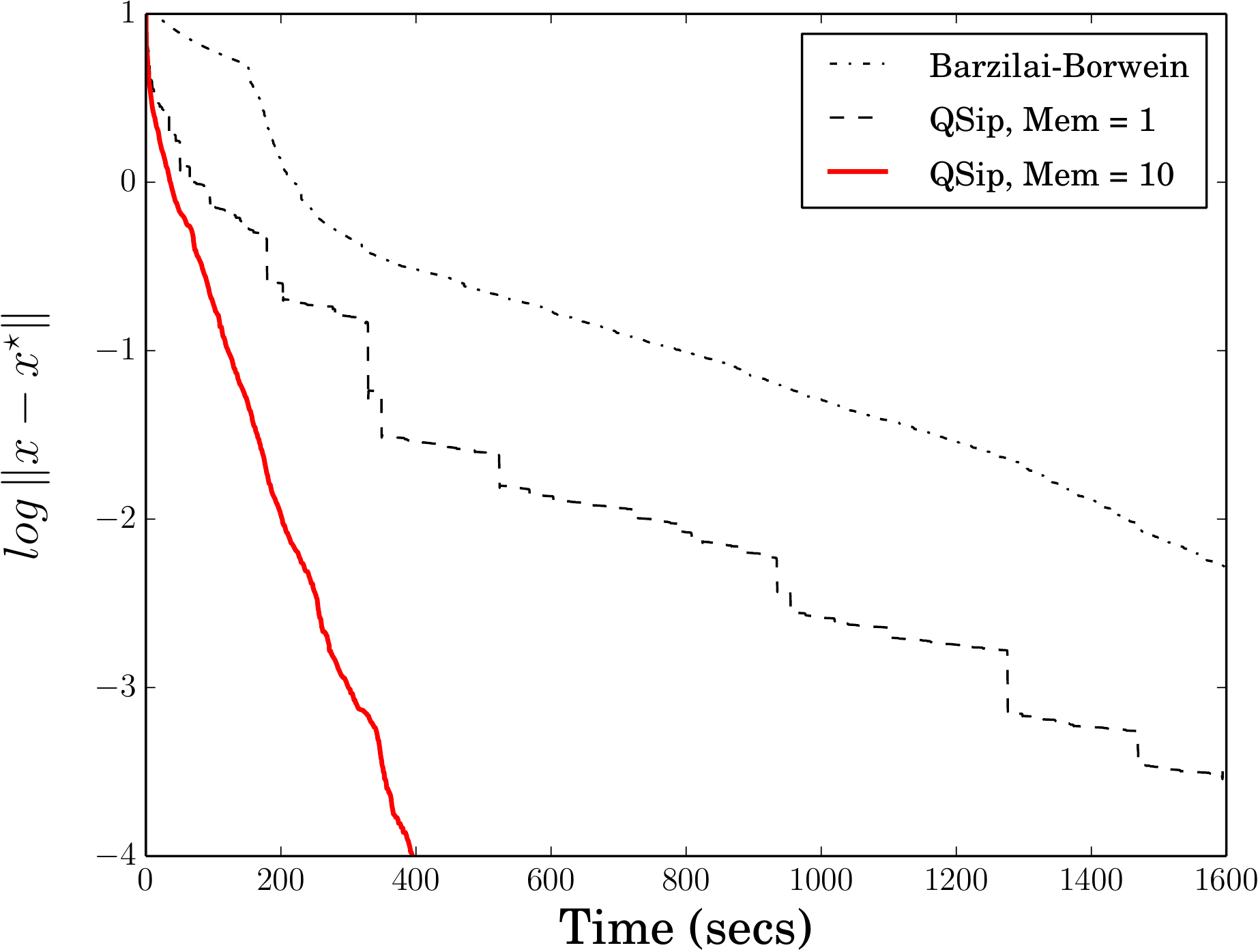}
  \label{Exp_L2}
  \caption{Performance of solvers applied to a group-Lasso problem.
    The horizontal axis measures elapsed time; the vertical axis
    measures distance to the solution.}
\end{figure}

\subsubsection{1-dimensional total variation} \label{sec:TV} Our third
experiment sets
\[
g(x)=\sum_{i=1}^{n-1}|x_{i+1}-x_{i}|,
\]
which is the anisotropic total-variation regularizer described in
Examples~\ref{ex:graph1-norm-example} and~\ref{ex:graph-1-norm}. The
matrix $A$ is fully lower triangular. Figure~\ref{Exp_TV} compares the
convergence behavior of \Qsip\ with the Barzilai-Borwein proximal
solver. The Python package {\tt
  prox-tv}~\cite{conf/icml/Barbero11,barberoTV14} was used for the
evaluation of the (unscaled) proximal operator, needed by the
Barzilai-Borwein solver. The \Qsip\ solver, with memories of 1 and 10,
outperformed the Barzilai-Borwein solver.
\begin{figure}
  \centering
  \includegraphics[scale=0.52]{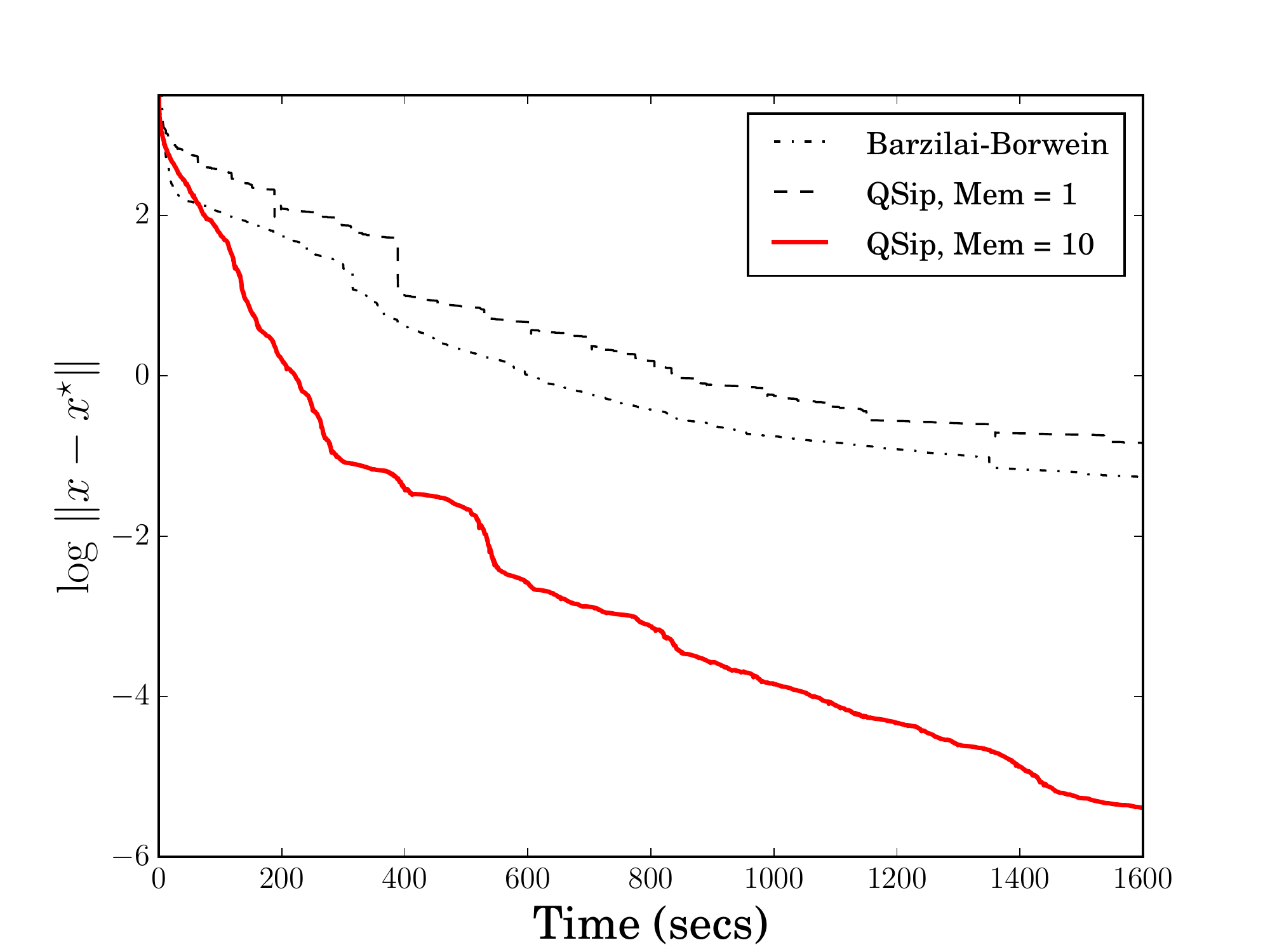}
  \caption{Performance of the \Qsip\ solver applied to a 1-dimensional
    total-variation problem.} \label{Exp_TV}
\end{figure}

\subsection{Sparse logistic regression}

This next experiment tests \Qsip\ on the sparse logistic-regression
problem problem
\[
  \minimize{x} \quad
  \frac{1}{N}\sum_{i=1}^{N}\log(1+\exp[a_{i}\T x])+\lambda \|x\|_{1},
\]
where $N$ is the number of observations.  The
\textit{Gisette}~\cite{guyon2004result} and
\textit{Epsilon}~\cite{PascalChallenge2016} datasets, standard
benchmarks from the UCI Machine Learning
Repository~\cite{Lichman:2013}, are used for the feature vectors
$a_i$.
\textit{Gisette} has $5K$ parameters and $13.5K$ observations;
\textit{Epsilon} has $2K$ parameters with $400K$ observations. These
datasets were chosen for their large size and modest number of
parameters. In all of these experiments, $\lambda = 0.01$.

Figure~\ref{fig:l1logreg} compares \Qsip\ to the Barzilai-Borwein
solver, and to newGLMNet \citep{KimKohLustBoydGori:2007}, a
state-of-the-art solver for sparse logistic regression. (Other
possible comparisons include the implementation of
\citet{Scheinberg2016}, which we do not include because of difficulty
compiling that code.) Because we do not know a priori the solution for
this problem, the vertical axis measures the log of the optimality
residual $\norm{x_k - \prox{}{g}(x_k-\nabla f(x_k))}_\infty$ of the
current iterate. (The norm of this residual necessarily vanishes at
the solution.)  On the \textit{Gisette} dataset, Barzilai-Borwein and
newGLMNNet are significantly faster than the proximal quasi-Newton
implementation. On the \emph{Epsilon} dataset, however, the
quasi-Newton is faster at all levels of accuracy.

\begin{figure}
  \centering
  \begin{tabular}{@{}c@{\ }c@{}}
   \includegraphics[width=0.48\textwidth]{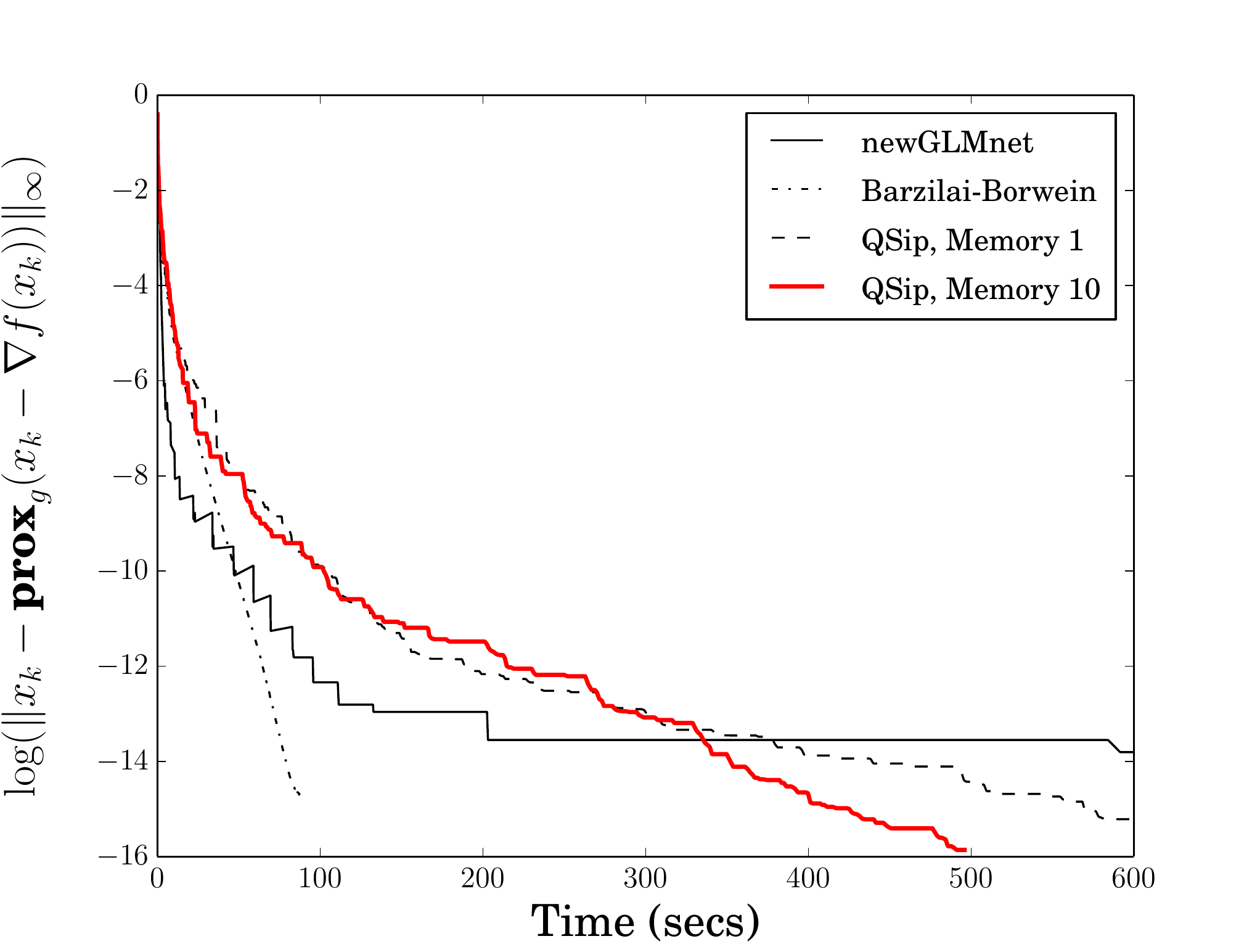}
  &\includegraphics[width=0.48\textwidth]{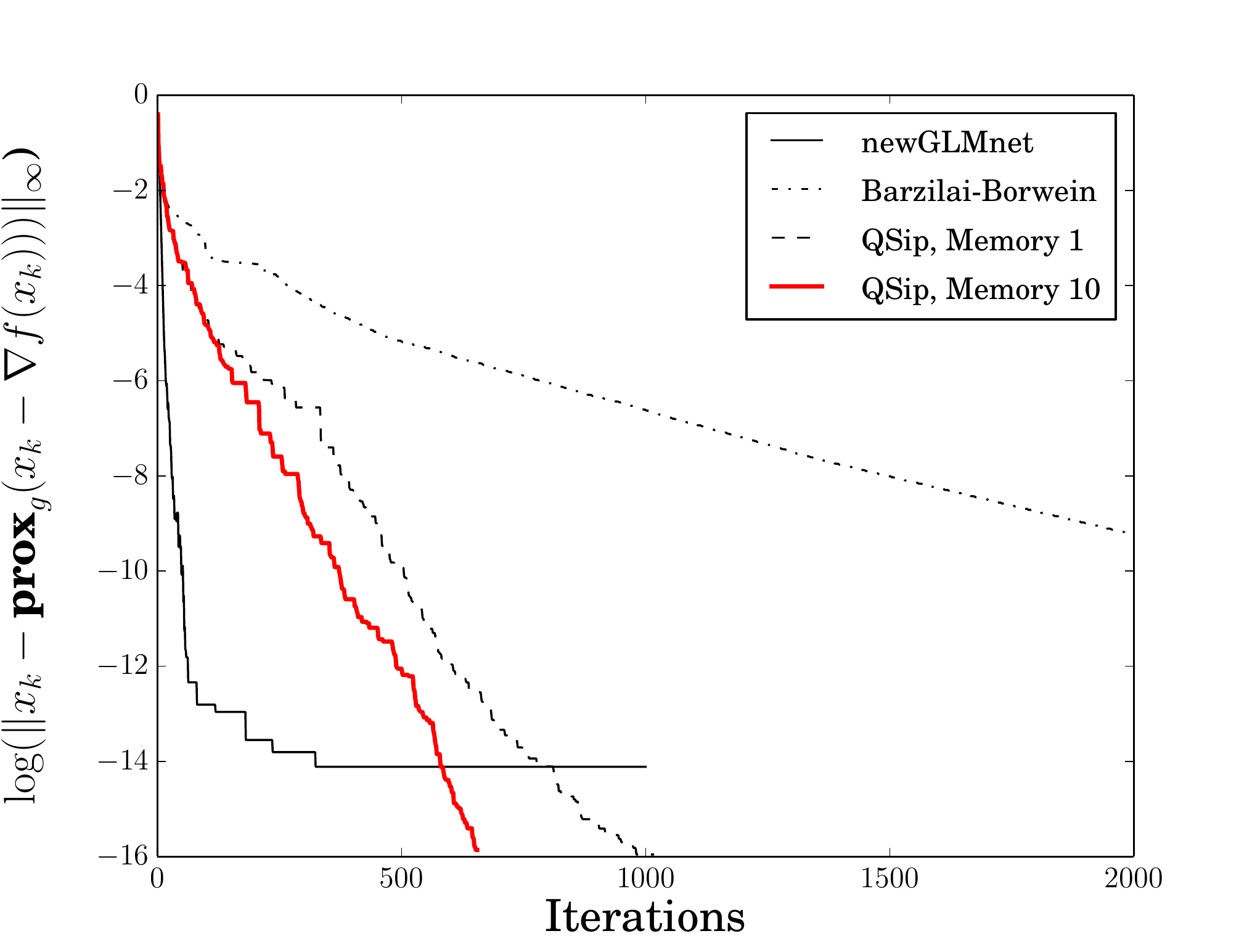}
 \\\includegraphics[width=0.48\textwidth]{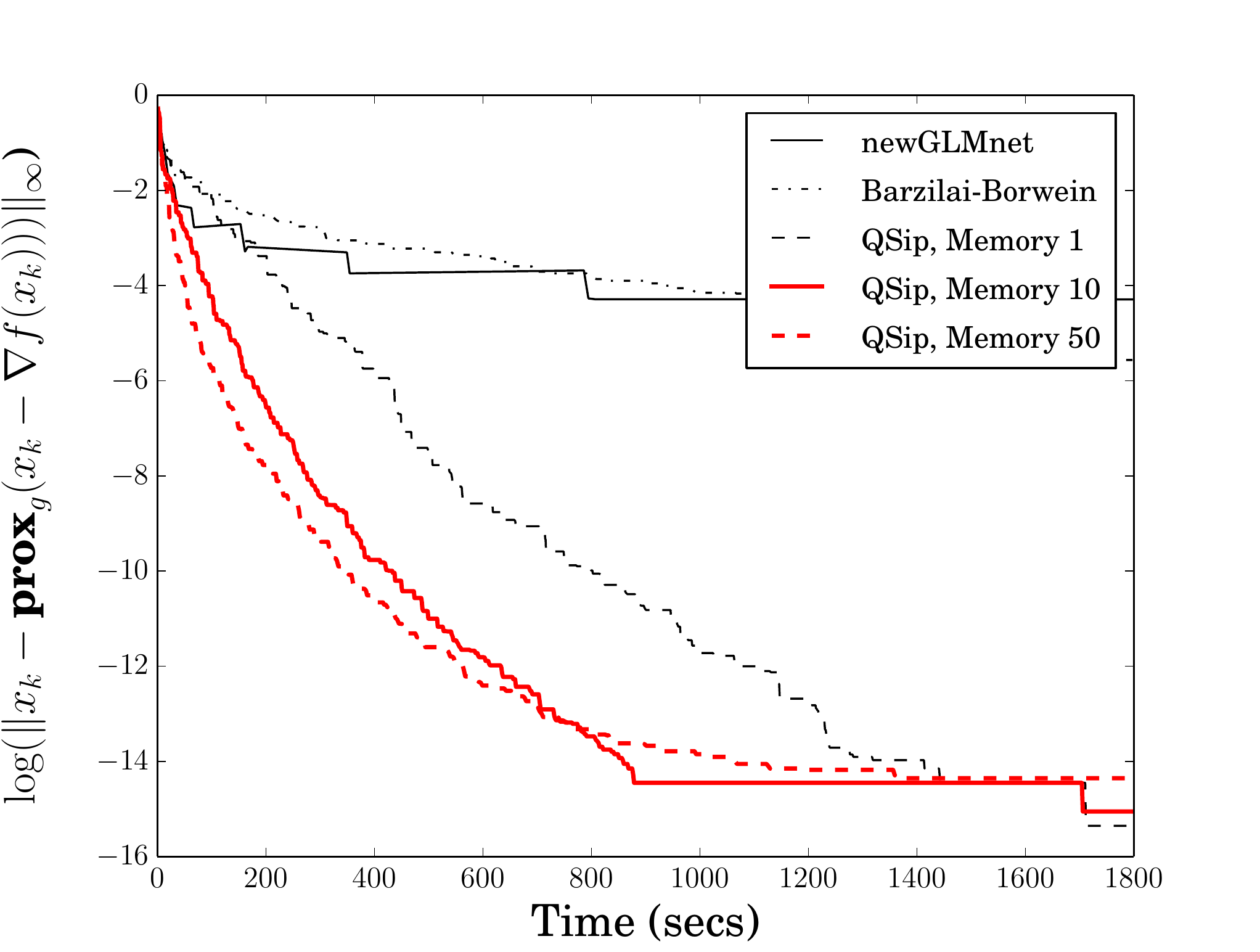}
  &\includegraphics[width=0.48\textwidth]{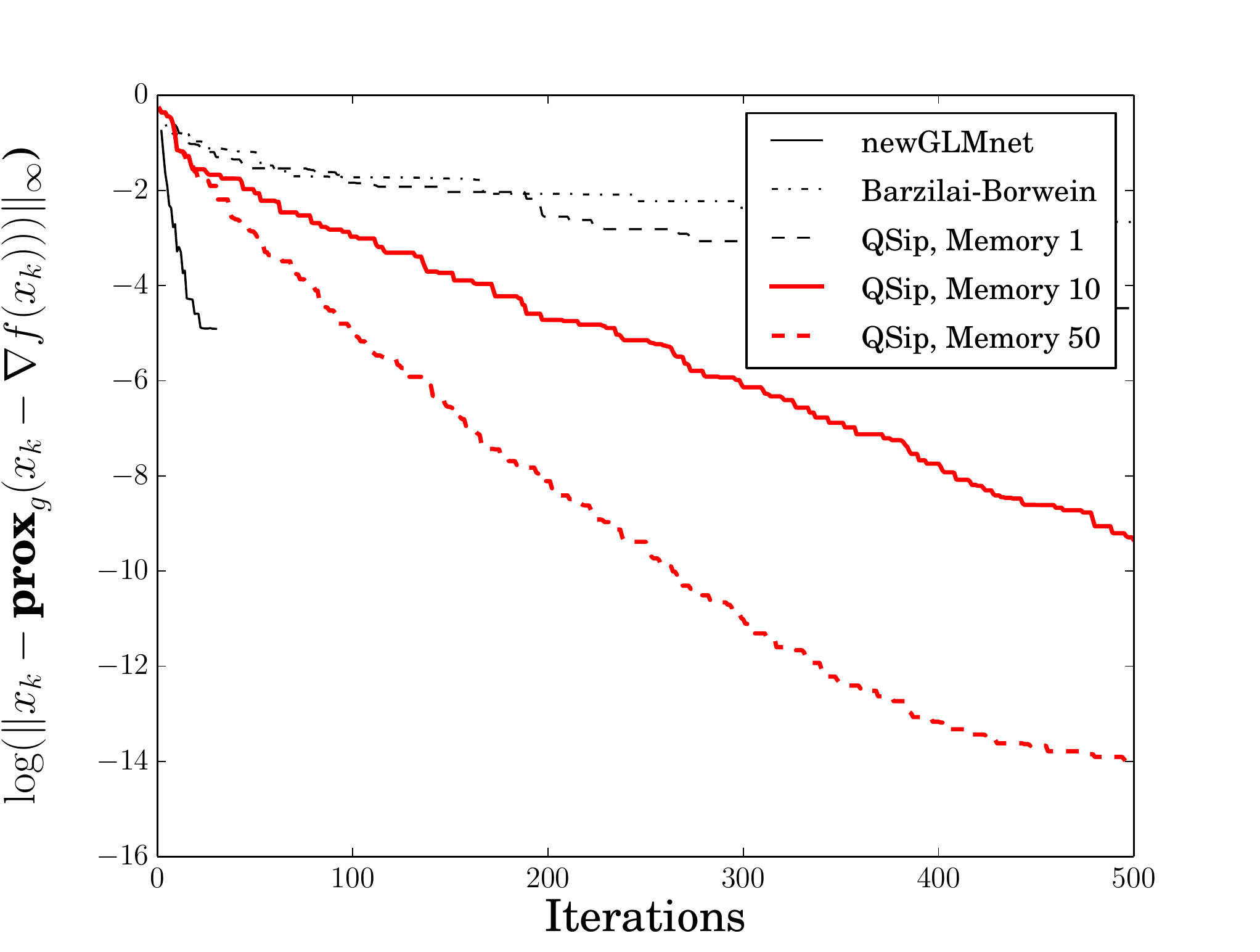}
  \end{tabular}
  \caption{Performance of solvers on a sparse logistic-regression
    problem. Top row: \emph{Gisette} dataset; bottom row:
    \emph{Epsilon} dataset. The left and right columns, respectively,
    track the optimality of the current solution estimate versus
    elapsed time and iteration number.}
  \label{fig:l1logreg}
\end{figure}

\section{Conclusion} \label{sec:conclusion}

Much of our discussion revolves around techniques for solving the
Newton systems~\eqref{eq:5} that arise in the implementation of an
interior method for solving QPs. The Sherman-Woodbury formula features
prominently because it is a convenient vehicle for taking advantage of
the structure of the Hessian approximations and the structured
matrices that typically define QS functions. Other alternatives,
however, may be preferable, depending on the application.

For example, we might choose to reduce the 3-by-3 matrix
in~\eqref{eq:5} to an equivalent symmetrized system
\[
 \pmat{-Q & A^T \\ A & D}\pmat{\Delta y\\\Delta s}
   = -\pmat{-{r_d}\\r_p+V\inv r_\mu}
\]
with $D:=V\inv S$. As described by \citet{benzi2008some}, Krylov-based
method, such as MINRES \citep{PaigSaun:1975}, may be applied to a
preconditioned system, using the preconditioner
\[
  P = \pmat{ -\Lscr(u)  \\ & D },
\]
where $\Lscr(u)$ is defined in~\eqref{eq:9}. This ``ideal''
preconditioner clusters the spectrum into three distinct values, so
that in exact arithmetic, MINRES would converge in three
iterations. The application of the preconditioner requires solving
systems with $\Lscr$ and $D$, and so all of the techniques discussed
in~\S\ref{sec:eval-prox-oper} apply. One benefit, however, which we have
not explored here, is that the preconditioning approach allows us to
approximate $\Lscr\inv(u)$, rather than to compute it exactly, which may
yield computational efficiencies for some problems.

\section*{Acknowledgments}

The authors are grateful to three anonymous referees for their thoughtful suggestions, and for a number of critical corrections. We also wish to thank Fiorella Sgallari for handling this paper as Associate Editor.

\bibliographystyle{abbrvnat}
\bibliography{master}

\appendix

\section{QS Representation for a quadratic} \label{sec:qs-soc}

Here we derive the QS representation of a support function that
includes an explicit quadratic term:
\[
 g(x)=\sup_y\set{y\T(B_0x+d_0)-\half y\T Q y | A_0y\succeq_{\Kscr_0} b_0}.
\]
Let $R$ be such that $R\T R = Q$.  We can then write the
quadratic function in the objective as a constraint its epigraph,
i.e.,
\[
  g(x)
  =\sup_{y,\,t}
  \set{
    y\T(B_{0}x+d_{0})-\half t |
    A_{0}y\succeq_{\Kscr}b_{0},\ \|Ry\|^2 \leq t}.
\]
Next we write the constraint $\|Ry\|^2 \leq t$ as a second-order cone
constraint:
\begin{align*}
  \|Ry\|^{2}\leq t & \iff\|Ry\|^{2}\leq\frac{(t+1)^{2}-(t-1)^{2}}{4}
\\&\iff
   \|Ry\|^{2}+\left(\frac{t-1}{2}\right)^{2}
    \leq\left(\frac{t+1}{2}\right)^{2}
\\
 &\iff
 \sqrt{\|Ry\|^{2}+\left(\frac{t-1}{2}\right)^{2}}\leq\frac{t+1}{2}\\
 &\iff
 \left\|
   \begin{pmatrix}
     0 & \nicefrac{1}{2}\\
     R & 0
   \end{pmatrix}
   \begin{pmatrix}y\\t\end{pmatrix}
  +\begin{pmatrix}-\nicefrac{1}{2}\\0\end{pmatrix}
  \right\|\leq\frac{t+1}{2}\\
 &\iff
   \begin{pmatrix}
     0 & \nicefrac{1}{2}\\
     0 & \nicefrac{1}{2}\\
     R & 0
   \end{pmatrix}
   \begin{pmatrix}y\\t\end{pmatrix}
  \preceq_{Q}
  \begin{pmatrix}
      \phantom-\nicefrac{1}{2}
    \\\       -\nicefrac{1}{2}
    \\0\end{pmatrix}.
\end{align*}
Concatenating this with the original constraints gives a QS function with
parameters
\begin{equation*}
       A = \pmat{0&\nicefrac{1}{2}\\0&\nicefrac12\\R&0\\A_0&0},
\quad  b = \pmat{{\phantom-\nicefrac12}\\-\nicefrac12\\0\\b_0},
\quad  d = \pmat{d_0\\-\nicefrac12},
\quad  B = \pmat{B_0\\0},
\quad  \Kscr = \Qcone^{n+2}\times \Kscr_0.
\end{equation*}

\end{document}